\newtheorem{theorem}{Theorem}[section]{\bf}{\it}
{\bf}{\it}
\newtheorem{proposition}[theorem]{Proposition}{\bf}{\it}
{\bf}{\it}
{\bf}{\it} 
{\bf}{\it}
\theoremstyle{definition}
{\bf}{\it}
\newtheorem{remark}[theorem]{Remark}{\bf}{\it}          
\numberwithin{equation}{section}
\newcommand{\R}{\mathbb R}
\newcommand{\N}{\mathbb N}
\newcommand{\Z}{\mathbb Z}
\newdimen\vintkern\vintkern11pt
\def\vint{-\kern-\vintkern\int}
\newcommand{\dt}{\;\mathrm{d}t}
\newcommand{\ds}{\;\mathrm{d}s}
\newcommand{\dx}{\;\mathrm{d}x}
\newcommand{\bS}{\mathbb{S}}
\newcommand{\haus}{\mathcal{H}}
\newcommand{\cC}{\mathcal{C}}
\newcommand{\cW}{\mathcal{W}}
\newcommand{\cE}{\mathcal{E}}
\newcommand{\fI}{\mathfrak{I}} 
\newcommand{\bB}{\mathbb{B}}
\newcommand{\Mod}{\mathrm{Mod}}
\newcommand{\interior}{\mathrm{int}}
\newcommand{\wind}{\mathrm{circ}}
\newcommand{\longi}{\Sigma} 
\newcommand{\Bd}{\mathrm{Bd}}
\newcommand{\Wh}{\mathrm{Wh}}
\begin{document}

\title{Quasiregular ellipticity of open and generalized manifolds}
\date{\today}
\author{Pekka Pankka}
\address{University of Jyv\"askyl\"a, Department of Mathematics and Statistics (P.O. Box 35), FI-40014 University of Jyv\"askyl\"a, Finland}\email{pekka.j.pankka@jyu.fi}
\thanks{P.P. and K.R. are supported by the Academy of Finland. J-M.W. is partially supported by the National Science Foundation grant DMS-1001669.}

\author{Kai Rajala}
\address{University of Jyv\"askyl\"a, Department of Mathematics and Statistics (P.O. Box 35), FI-40014 University of Jyv\"askyl\"a, Finland}
\email{kai.i.rajala@jyu.fi}

\author{Jang-Mei Wu}

\address{Department of Mathematics, University of Illinois,  1409 West Green Street, Urbana, IL 61822, USA}
\email{wu@math.uiuc.edu}

\subjclass[2010]{Primary 30C65; Secondary 30L10}
\keywords{quasiregular mappings, quasiregular ellipticity, decomposition spaces, Semmes metrics}

\begin{abstract}
We study the existence of geometrically controlled branched covering maps from $\R^3$ to open $3$-manifolds or to decomposition spaces $\bS^3/G$, and from $\bS^3/G$ to $\bS^3$.
\end{abstract}

\maketitle

\centerline{\emph{Dedicated to the memory of Fred Gehring,}}

\centerline{\emph{whose work has been our inspiration}}

\section{Introduction}

In this article, we study the existence of geometrically controlled branched covering maps from $\R^3$ to open $3$-manifolds or to decomposition spaces $\bS^3/G$, and from $\bS^3/G$ to $\bS^3$.
The spaces considered are associated to classical examples in geometric topology and are equipped  with a geometrically natural metric; the covering maps are discrete, open and either of bounded length distortion or quasiregular. We discuss the possibility of  extending  results in \cite{HW} and \cite{PW} on quasisymmetric parametrization to quasiregular ellipticity, and of extending the theorems in \cite{PR} on quasiregular ellipticity of link complements to more general manifolds.

To set the stage for more general discussion, we consider the classical Whitehead continuum $\Wh\subset \R^3$.

The unit sphere $\bS^3 \subset \R^4$ is a union of the closed solid $3$-tori $T=\bar B^2\times \bS^1$ and $T'=\bS^1 \times \bar B^2$. Let $S=\{0\}\times \bS^1$, $S' = \bS^1\times \{0\}$, and $L = S' \cup C$ be a smooth Whitehead link in $\bS^3$ where $C$ is an unknot in $\bS^3$ which is knotted but contractible in $T$; see Figure \ref{fig:Whitehead}.

\begin{figure}[h!]
\label{fig:Whitehead}
\includegraphics[scale=0.50]{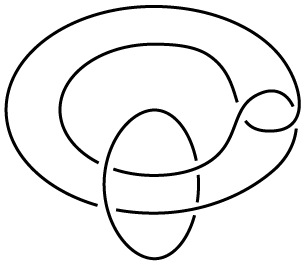}
\caption{Whitehead link $L$.}
\end{figure}

Let $\varphi \colon T \to T$ be a diffeomorphic embedding so that $\varphi(S) = C$. Denote $T_0=T$ and $T_1 = \varphi(T)$. The tori $T_k=\varphi^{k}(T)$ obtained by iterating the map $\varphi$ form a nested sequence and the continuum $\Wh = \bigcap_{k\ge 0} T_k$ is called the \emph{Whitehead continuum}.

\begin{figure}[h!]
\label{fig:Whitehead2}
\includegraphics[scale=0.50]{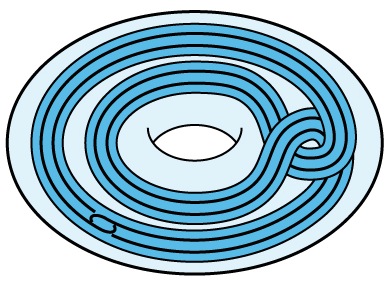}
\caption{Tori $T_0$, $T_1$, and $T_2$.}
\end{figure}

Whitehead discovered the continuum $\Wh$ in 1935 \cite{whitehead} and showed that  its complement \emph{$W=\bS^3\setminus \Wh$ is a contractible open $3$-manifold which is not homeomorphic to $\R^3$}; the Whitehead manifold $W$ is the first such manifold found. For similar reasons, the quotient space $\R^3/\Wh$ is not homeomorphic to $\R^3$. On the other hand, $(\R^3/\Wh)\times \R$ is homeomorphic to $\R^4$ showing that $\R^4$ has exotic factors; the spaces $\R^2$ and $\R^3$ however are known to have no exotic factor.

Since $W$ and $\R^3/\Wh$ are not homeomorphic to $\R^3$, the main question from the quasiconformal point of view is whether $W$ and $\R^3/\Wh$ admit metrics for which there exists a quasiregular mapping
\begin{enumerate}
\item from $\R^3$ to $W$, or \label{item:R3_W}
\item from $\R^3$ to $\R^3/\Wh$, or \label{item:R3_Wh}
\item from $\R^3/\Wh$ to $\R^3$.\label{item:Wh_R3}
\end{enumerate}

In question \eqref{item:R3_W}, it is natural to consider Riemannian metrics on $W$. Let $\pi\colon \R^3\to \R^3/\Wh$ be the quotient map.
Since the target in \eqref{item:R3_Wh} contains a non-manifold point $[\Wh]=\pi(\Wh)$, we define quasiregular mappings as in \cite{OR}. This definition, however, does not extend to the case \eqref{item:Wh_R3} when the source is not a manifold. Intriguingly, for mappings of bounded length distortion (BLD) the question \eqref{item:Wh_R3} is well-defined and has been answered by Heinonen and Rickman \cite{Heinonen-Rickman-Duke}.

Recall that a mapping $f\colon X\to Y$ between metric spaces is \emph{$L$-BLD} ($L\ge 1$) if $f$ is a discrete and open map which satisfies
\[
\ell(\gamma)/L \le \ell(f\circ \gamma) \le L\ell(\gamma)
\]
for all paths $\gamma$ in $X$, where $\ell(\cdot)$ is the length of a path.

In this article we consider questions \eqref{item:R3_W}--\eqref{item:Wh_R3} in more general context of decomposition spaces associated to defining sequences. For the open Whitehead manifold $W$ and the Whitehead (decomposition) space $\bS^3/\Wh$ we have the following results.

\begin{theorem}
\label{thm:R3_W}
There is no BLD-mapping $\R^3 \to (W,g)$ for any Riemannian metric $g$.
\end{theorem}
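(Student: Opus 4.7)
\textit{Proof Plan.}
Assume toward a contradiction that $f\colon \R^3 \to (W,g)$ is $L$-BLD. The plan is to combine the branched-covering structure of $f$ with the classical Whitehead fact that $W$ is not simply connected at infinity, while $\R^3$ is.

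\emph{Step 1 (Properness and branched covering).}
The lower BLD length bound yields $d_g(f(0),f(x))\geq |x|/L$ for every $x\in\R^3$, so if $f(x_n)$ converges in $(W,g)$ then $\{x_n\}$ is bounded in $\R^3$. Hence $f$ is proper; being also discrete, open, and defined between connected $3$-manifolds of the same dimension, $f$ is a branched covering onto $W$ of some finite degree $N$ with codimension-two branch set. By Whitehead's theorem there is a compact set $K_0\subset W$ such that for every compact $K\supset K_0$ some loop in $W\setminus K$ is nontrivial in $\pi_1(W\setminus K_0)$. Moreover, by the Whitehead linking structure such a loop can be chosen to have infinite order in $\pi_1(W\setminus K_0)$, so all its nonzero iterates are nontrivial there.

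\emph{Step 2 (Filling at infinity).}
Fix $R\gg 1$, set $K:=\overline{B_g(f(0),R)}\cap W\supset K_0$, and take $\gamma\subset W\setminus K$ as above, disjoint from the critical value set of $f$ and of controlled $g$-length (a short meridional loop deep inside the Whitehead tori suffices). Then $f^{-1}(\gamma)$ is a disjoint union of loops, each covering $\gamma$ some $d_i$-fold with $\sum d_i = N$. Fix a single component $\tilde\gamma$ of winding $d\leq N$: the BLD bounds give $\diam_{\R^3}(\tilde\gamma)\leq Ld\,\ell_g(\gamma)$ and $|x|\geq R/L$ on $\tilde\gamma$. For $R$ large enough compared with $L^2 N\ell_g(\gamma)$, $\tilde\gamma$ lies in a ball inside $\{|x|>R/(2L)\}\subset\R^3$, so it bounds a singular disk $\tilde D\subset\{|x|>R/(2L)\}$ of comparable diameter. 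The image $f(\tilde D)$ is then a singular disk in $W$ bounded by $\gamma^d$, with each $y\in f(\tilde D)$ satisfying $d_g(f(0),y)\geq R/(2L^2)$. Enlarging $R$ further so that $R/(2L^2)$ exceeds $\diam_g(K_0)+d_g(f(0),K_0)$, we find $f(\tilde D)\cap K_0=\emptyset$; hence $\gamma^d$ is nullhomotopic in $W\setminus K_0$. This contradicts the choice of $\gamma$ in Step~1.

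\emph{Main obstacle.}
The delicate point is Step~2: arranging the BLD scaling so that lifts of short loops deep in the Whitehead end are uniformly short in $\R^3$ and admit local filling disks that remain well outside a large ball. One must match $L$, the degree $N$, and the $g$-length of the test loop, and select $\gamma$ nested arbitrarily deep within the Whitehead tori so that $\ell_g(\gamma)$ stays bounded while $d_g(f(0),\gamma)\to\infty$. The essential metric input is the existence, for every Riemannian metric $g$ on $W$, of loop representatives of the nontrivial infinite-order classes at infinity whose $g$-length is small compared with their $g$-distance from a basepoint; this is the heart of translating the topological Whitehead obstruction into an obstruction to BLD ellipticity of $W$.
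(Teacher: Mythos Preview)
Your approach is genuinely different from the paper's, and it has two real gaps.

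First, Step~1 is wrong: the inequality $d_g(f(0),f(x))\ge |x|/L$ does not follow from the BLD condition. BLD gives $\ell_g(f\circ[0,x])\ge |x|/L$, but this bounds $d_g(f(0),f(x))$ only from \emph{above} by the length of that particular image path, not from below. To bound the distance from below you would lift a short path in $W$ from $f(0)$ to $f(x)$, but the lift need not terminate at $x$; the model obstruction is the covering $\R\to\bS^1$. Hence properness, and with it the finite degree $N$ on which the rest of your plan depends, is unjustified. Second, the ``essential metric input'' you yourself flag is not available for an arbitrary Riemannian $g$: nothing prevents the meridians of the nested tori $T_k$ from having $g$-length growing as fast as (or faster than) their $g$-distance to a basepoint, so the comparison $\ell_g(\gamma)\ll d_g(f(0),\gamma)$ you need can simply fail. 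You identify this as ``the heart'' of the argument but do not indicate how to secure it, and for general $g$ it cannot be secured directly.

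The paper sidesteps both issues by a rescaling--limit argument. Completeness of $(W,g)$ \emph{does} follow from path-lifting, and surjectivity of $f$ then lets one choose centers $x_k\in\R^3$ with $f(B^3(x_k,k))\subset T_1$ for every $k$ (no properness needed). The piece $\cC=T_0\setminus\Wh$ embeds isometrically into an infinite cyclic cover $\cW_\infty$ of a closed $3$-manifold $\cW=\cC/\!\!\sim$, and the translated restrictions $f(\cdot+x_k)$ lift to $L$-BLD maps $B^3(0,k)\to\widetilde{\cW_\infty}$ into the universal cover. A normal-family limit yields a BLD map $\R^3\to\widetilde{\cW_\infty}$; but $\pi_1(\cW_\infty)$ contains free groups of every finite rank, so $\widetilde{\cW_\infty}$ is conformally hyperbolic and the limit must be constant---a contradiction. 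The essential maneuver is that the uncontrolled metric $g$ on $W$ is traded, through the limiting process, for the fixed periodic geometry on $\cW_\infty$, which is exactly what your Step~2 lacks.
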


\medskip

A natural framework for questions \eqref{item:R3_Wh} and \eqref{item:Wh_R3} is provided by the self-similar decomposition spaces $\bS^3/G$ (or $\R^3/G$) arising from Semmes' initial packages.  Semmes showed that these spaces admit natural metrics $d$ which promote topological self-similarity in $\bS^3/G$ to geometric self-similarity in $(\bS^3/G, d)$; see Section \ref{sec:Decomposition_spaces}.
In these terminologies, $\R^3/\Wh$  and $\bS^3/\Wh$ are given by the package $(T,\varphi(T),\varphi)$.
Semmes metric $d$ on the Whitehead space $\bS^3/\Wh$ extends the topological self-similarity of the pairs $(T_k,\varphi^k)$ to the  quasi-self-similarity of the space $(\bS^3/\Wh, d)$. More precisely, there exist $0< \lambda < 1$ and $L \ge 1$ so that
\[
\lambda^k d(x,y)/L \le d(\varphi^k(x),\varphi^k(y)) \le L \lambda^k d(x,y)
\]
for all $x,y\in T$ and $k\in \N$.
\medskip

We show in Theorem \ref{thm:omit} that every non-constant quasiregular mapping $\R^3\to \bS^3/\Wh$ omits the point $[\Wh]$. On the other hand, $\bS^3/\Wh$ is compact and quasi-convex in the Semmes metric. The combination of these two facts prevents the existence of BLD-mappings $\R^3\to (\bS^3/\Wh,d)$.

\begin{theorem}\label{thm:Whitehead non-BLD ellip}
There are no BLD-mappings $\R^3 \to (\bS^3/\Wh, d)$ for any Semmes metric $d$ on $\bS^3/\Wh$.
\end{theorem}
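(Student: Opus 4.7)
The plan is to argue by contradiction using Theorem~\ref{thm:omit}. Assume $f \colon \R^3 \to (\bS^3/\Wh, d)$ is an $L$-BLD mapping. Being BLD, $f$ is discrete, open, non-constant, and qualifies as a quasiregular map in the sense of \cite{OR}; Theorem~\ref{thm:omit} then gives $[\Wh] \notin f(\R^3)$. I will show that $f(\R^3)$ is both open and closed in $\bS^3/\Wh$. Since $\bS^3/\Wh$ is connected and $f(\R^3)$ is nonempty, this forces $f(\R^3) = \bS^3/\Wh$, contradicting the omission.

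Openness of $f(\R^3)$ is immediate from openness of $f$. The heart of the matter is closedness, which I would establish by combining quasi-convexity with path lifting for discrete open maps. Given $y_0 \in \overline{f(\R^3)}$, pick $y_1 \in f(\R^3)$ close to $y_0$ and, by quasi-convexity of $(\bS^3/\Wh, d)$, a path $\gamma \colon [0,1] \to \bS^3/\Wh$ from $y_1$ to $y_0$ with $\ell(\gamma) \le C\, d(y_0, y_1) < \infty$. By the Floyd--Fox path-lifting theorem for discrete open maps, take a maximal lift $\tilde\gamma \colon [0, c) \to \R^3$ of $\gamma$ starting at some $x_1 \in f^{-1}(y_1)$. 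The $L$-BLD inequality gives
\[
\ell(\tilde\gamma) \le L\,\ell(\gamma) < \infty,
\]
so $\tilde\gamma$ is rectifiable, and completeness of $\R^3$ produces $x_c := \lim_{t \to c^-}\tilde\gamma(t) \in \R^3$ with $f(x_c) = \gamma(c)$ by continuity. If $\gamma(c) = [\Wh]$, this already contradicts Theorem~\ref{thm:omit}. Otherwise $f$ is a local branched covering near $x_c$, so the lift extends past $c$, contradicting maximality unless $c = 1$; then $y_0 = f(x_c) \in f(\R^3)$, proving closedness.

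The step I expect to require the most care is justifying the path-lifting step in a setting where the target has the non-manifold point $[\Wh]$. This turns out to be benign for our purposes: any obstruction to extending the lift would force $\gamma(c) = [\Wh]$, a case already excluded by Theorem~\ref{thm:omit}. Hence the standard Floyd--Fox machinery applies to $f$ as a discrete open map into $\bS^3/\Wh$, and the BLD length bound supplies exactly the rectifiability of lifts required to run the closedness argument.
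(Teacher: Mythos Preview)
Your proposal is correct and follows essentially the same approach as the paper: invoke Theorem~\ref{thm:omit} to force omission of $[\Wh]$, then use compactness and quasi-convexity of $(\bS^3/\Wh,d)$ together with the BLD length bound and path-lifting to derive surjectivity, yielding the contradiction. The paper only sketches this combination in the introduction; your write-up supplies the details (closedness of $f(\R^3)$ via maximal lifts of rectifiable paths) in the expected way, and your handling of the non-manifold point is sound since, as the paper notes, discrete open maps into these generalized manifolds retain the path-lifting property via \cite[Section 3.3]{Heinonen-Rickman-Duke}.
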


The proof of Theorem \ref{thm:omit} is a modification of the argument in \cite{HW} and is in the spirit of Fred Gehring's linking theorem in \cite{Gehring}; see also \cite{SemmesS:Goomsw} and \cite{PW}.

We do not know whether $(\bS^3/\Wh, d)$ is quasiregularly elliptic, i.e., whether there exists a non-constant quasiregular mapping $\R^3\to (\bS^3/\Wh,d)$. However, the method of proving Theorem \ref{thm:omit} shows that many decomposition spaces, for example the space $\bS^3/\Bd$ associated to the Bing double $\Bd$, are not quasiregularly elliptic.

\begin{theorem}\label{thm:bing non-ellipitc}
Let $\bS^3/\Bd$ be the decomposition space associated to the Bing double  $\Bd$ and $d_\lambda$ a Semmes metric on $\bS^3/\Bd$ for a scaling $\lambda \in (0,1)$. Then every quasiregular map $\R^3\to (\bS^3/\Bd,d_\lambda)$ is constant.
\end{theorem}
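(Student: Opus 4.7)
The plan is to adapt the method underlying Theorem \ref{thm:omit}, exploiting the crucial structural difference between the \emph{linear nesting} of the Whitehead decomposition and the \emph{branching} self-similarity of the Bing decomposition. Denote by $\pi \colon \bS^3 \to \bS^3/\Bd$ the quotient map and by $E := \pi(\Bd) \subset \bS^3/\Bd$ the (Cantor) set of non-manifold points, where $\Bd = \bigcap_k \bigcup_j T_k^j$ is the Bing continuum obtained by iterated Bing-doubling of a solid torus $T_0$. The proof would proceed in two steps: first, show that any non-constant quasiregular $f \colon \R^3 \to (\bS^3/\Bd, d_\lambda)$ must omit every point of $E$; second, use the branching at every scale to upgrade omission of $E$ to constancy of $f$.

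For the first step, I would run the Gehring-style linking argument used in Theorem \ref{thm:omit}. The key topological ingredient there is that the Whitehead successor torus $T_k$ is contractible in its predecessor $T_{k-1}$. This same contractibility holds for the Bing construction \emph{at every branch}: both components of the Bing double of a solid torus $T$ are unknotted and null-homotopic in $T$. Applying the Whitehead argument to the nested sequence of tori $T_k^{j_k}$ converging to each component of $\Bd$ separately yields omission of the corresponding point of $E$, hence of all of $E$.

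The second step is the substantive new point. Unlike $[\Wh]$, the set $E$ is dense in itself, and, by the $\lambda$-quasi-self-similarity of the Semmes metric, $E$ reappears inside every Semmes-ball about every point of $E$ at every scale. In particular, inside every ball of Semmes-radius $\lambda^k$ about any point of $E$ there live $2^m$ further Bing tori at scale $\lambda^{k+m}$, each contractible in its parent, and each therefore producing its own linking obstruction independent of the others. Quantifying these obstructions by the modulus/capacity estimates underlying Theorem \ref{thm:omit}, I would argue that the local index function $i(\cdot,f)$ must be unbounded along a sequence of preimages accumulating at any point $x \in \R^3$ with $f(x)$ in the closure of $f(\R^3)$. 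This contradicts the discreteness of the quasiregular map $f$ unless $f(\R^3)$ is empty, i.e., unless $f$ is constant.

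The principal obstacle I expect is the quantitative bookkeeping in the second step: the modulus estimates coming from Theorem \ref{thm:omit} must be made uniform in the branching level and independent of $\lambda \in (0,1)$, so that the exponential $2^m$ growth in the number of linking obstructions is not cancelled by the exponential $\lambda^m$ contraction of the Semmes scale. Provided the obstruction at level $k+m$ contributes a definite quantum independent of $k, m$ (as dictated by the quasi-self-similarity of $d_\lambda$), the $2^m$ branching forces an index blow-up and the theorem follows. This is the decisive point at which the Bing case differs from the (still open) Whitehead case.
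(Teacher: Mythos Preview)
Your Step~1 reaches the right conclusion (omission of $E=\pi_{\fI}(L_\infty)$), but the branch-by-branch reduction is both unnecessary and not obviously correct. In the paper, Theorem~\ref{thm:omit} is stated and proved for an arbitrary initial package with order of circulation $\omega>m^{2/3}$; for the Bing package one has $m=2$ and, by the Freedman--Skora lemma, $\omega\ge 2>2^{2/3}$, so Theorem~\ref{thm:omit} applies directly and yields $f(\R^3)\cap E=\emptyset$ in one stroke. Restricting to a single nested chain $T_0\supset T_1^{j_1}\supset T_2^{j_1 j_2}\supset\cdots$ discards the other components, and it is not clear that a longitude of a \emph{single} Bing component must meet a meridian disk of its parent with exponentially growing multiplicity; the Freedman--Skora estimate uses both clasped components at once.

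Step~2 is where the genuine gap lies, and the paper's route is both different and much simpler. Once Step~1 gives $f(\R^3)\subset (\bS^3/\Bd)\setminus E$, the target of $f$ is an honest Riemannian $3$-manifold, so there is no local-index pathology left to exploit: your proposed blow-up mechanism has no non-manifold point to act at, and for points $x$ with $f(x)\in(\bS^3/\Bd)\setminus E$ the local index is perfectly well controlled. What \emph{does} change relative to the Whitehead case is that $E$ is now a Cantor set (since $m=2$), so $(\bS^3/\Bd)\setminus E$ has infinitely many ends. The Holopainen--Rickman Picard theorem then forces any quasiregular map $\R^3\to(\bS^3/\Bd)\setminus E$ to be constant. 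This is precisely the ``decisive difference'' from the Whitehead case you were searching for: a single omitted point leaves a one-ended complement (and the question remains open), whereas an omitted Cantor set leaves infinitely many ends, and that---not an index argument---is what kills $f$.
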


We would like to recall that $\bS^3/\Bd$ is topologically a $3$-sphere. In this sense, the obstruction to quasiregular ellipticity is a metric phenomenon related to the properties of the decomposition $\Bd$. It is interesting to note that there are non-trivial decomposition spaces which are quasiregularly elliptic. For instance, the decomposition spaces associated to Antoine's necklaces constructed using a large number of tori are  quasisymmetric to $\bS^3$ when equipped with  Semmes metrics; see \cite[Section 17]{PW} and \cite{SemmesS:Nonblp}. However, it should be emphasized that quasisymmetric parametrizability or quasiregular ellipticity of many classical decomposition spaces, such as the space associated to  Antoine's necklace constructed using three tori, is still an open question.

The positive answer of Heinonen and Rickman's to \eqref{item:Wh_R3} follows from the Berstein--Edmonds extension theorem for PL branched covers and Rickman's sheet construction:
\emph{whenever $\bS^3/G$ is a decomposition space associated to an initial package and $d$ is a Semmes metric, there exists $n_0\ge 1$ so that for each integer  $n = 3 i \ge n_0$, there is a BLD-mapping $(\bS^3/G, d) \to \bS^3$ of degree $n$}; see \cite{Heinonen-Rickman-Duke}.

We reduce the question on the existence of a BLD-map $(\bS^3/G,d)\to \bS^3$ to the Berstein--Edmonds extension theorem by a different method and prove the following result.

\begin{theorem}
\label{thm:BLD_degree}
Let $\bS^3/G$ be a decomposition space associated to an initial package and $d$ a Semmes metric on $\bS^3/G$. Then for each $n\ge 3$ there exists a BLD-mapping $(\bS^3/G, d) \to \bS^3$ of degree $n$.
\end{theorem}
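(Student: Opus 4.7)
The plan is to construct $f\colon (\bS^3/G,d)\to \bS^3$ scale-by-scale, exploiting the geometric self-similarity of the Semmes metric. Partition $\bS^3/G$ into the shells $A_k=\pi(T_k\setminus\interior(\varphi(T_k)))$, $k\geq 0$, where $\pi$ is the decomposition projection; by self-similarity, each $A_k$ is bi-Lipschitz, with uniform constants, to a $\lambda^k$-rescaled copy of the fundamental shell $A_0$. Fix a point $p\in\bS^3$ and partition a punctured neighborhood of $p$ into round Euclidean shells $B_k$ of outer radius $\lambda^k$, absorbing the remaining piece of $\bS^3$ into $B_0$. The core task is to produce a single PL branched covering $F_0\colon A_0\to B_0$ of degree $n$ whose restrictions to the inner and outer boundaries are compatible under the defining similarity $\varphi$. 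Then $f$ is built on each shell by
\[
f|_{A_k}=\sigma_k\circ F_0\circ \varphi^{-k},
\]
where $\sigma_k\colon B_0\to B_k$ is the radial similarity of ratio $\lambda^k$; the outer piece $\pi(\bS^3\setminus\interior T_0)$ is handled by an analogous single extension.

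The main step is the construction of $F_0$. Fix any PL branched cover $g\colon \partial^+A_0\to \partial^+B_0$ of degree $n$ from the outer boundary of $A_0$ (a torus, or a disjoint union of tori in the multi-component case) to the outer $2$-sphere of $B_0$. The gluing compatibility forces the inner boundary restriction to be $g^-=\sigma_1\circ g\circ\varphi^{-1}\colon\partial^-A_0\to\partial^-B_0$, which is again a PL branched cover of degree $n$. The problem then reduces to extending $g\sqcup g^-$ to a PL branched cover $F_0\colon A_0\to B_0$ of the same degree, and this is exactly what the Berstein--Edmonds extension theorem for PL branched covers of $3$-manifolds supplies, under the hypothesis $n\geq 3$. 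This degree bound is precisely what allows the divisibility-by-$3$ restriction inherent in the Rickman sheet construction used by Heinonen--Rickman to be dropped.

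With $F_0$ in hand, the pieces $f|_{A_k}=\sigma_k\circ F_0\circ \varphi^{-k}$ agree on the shared boundary $\partial A_k\cap\partial A_{k+1}$ by the chosen boundary compatibility, and so glue to a continuous branched cover $f\colon \bS^3/G\setminus E\to \bS^3\setminus\{p\}$, where $E$ is the image in $\bS^3/G$ of the limit set $\bigcap_k\varphi^k(T)$. The BLD constant of $F_0$ is some finite $L$; the source similarity $\varphi^{-k}$ expands lengths by a factor comparable to $\lambda^{-k}$ while the target similarity $\sigma_k$ contracts by $\lambda^k$, so each $f|_{A_k}$ is $L$-BLD with $L$ independent of $k$. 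Continuous extension across $E\mapsto \{p\}$ follows from the matched geometric decay of $\diam(A_k)$ and $\diam(B_k)$, and a telescoping argument yields the BLD bound on paths crossing $E$. The resulting $f$ has degree $\deg F_0=n$.

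The step I expect to be most delicate is verifying that the boundary pair $g\sqcup g^-$ satisfies the hypotheses of the Berstein--Edmonds extension when the initial package has several inner components (as in Bing's decomposition): one must check that the collection of inner boundary maps satisfies the necessary relative-homology obstruction so that a single PL branched cover $F_0$ on $A_0$ exists of the prescribed degree. Once this obstruction is handled, the remainder is a geometric assembly following the template of scale-by-scale constructions in the spirit of Semmes, Heinonen, and Rickman.
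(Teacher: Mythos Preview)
Your overall architecture---build a single degree-$n$ branched cover on the fundamental shell via Berstein--Edmonds, then propagate by the self-similarity of the Semmes metric and extend across the limit set---is exactly the paper's strategy. For a package with a single inner piece ($m=1$, e.g.\ the Whitehead package) your outline is correct as written.

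There is, however, a genuine gap when the package has $m\ge 2$ inner components (e.g.\ Bing). You take the target shells $B_k$ to be ordinary spherical annuli, so $B_0$ has exactly two boundary spheres, while $A_0=H\setminus\interior L_1$ has $m+1$ boundary components. If $F_0\colon A_0\to B_0$ is a branched cover of degree $n$, then the restrictions to the $m$ inner boundary surfaces $\partial H_i$ must have degrees $n_i$ with $\sum_i n_i=n$. But your compatibility rule forces $F_0|\partial H_i=\sigma_1\circ g\circ\varphi_i^{-1}$, which has degree $n$ for every $i$; hence $\sum_i n_i=mn\neq n$. So no such $F_0$ exists, and the gluing $f|_{A_k}=\sigma_k\circ F_0\circ\varphi^{-k}$ (which in any case needs reinterpretation when there are several $\varphi_i$) cannot be carried out with a single-point target. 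The difficulty you flag at the end is therefore not an obstruction to the extension theorem itself but a mismatch in the target combinatorics.

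The paper's fix is to replace the nested spherical shells by a second initial package $\fI'=(B,B_1,\ldots,B_m,\psi_1,\ldots,\psi_m)$ of round balls with the same branching pattern, so that the target shell $B\setminus\interior\bigcup_i B_i$ also has $m+1$ boundary spheres and the inner map on $\partial H_i$ lands on its own sphere $\partial B_i$. The Berstein--Edmonds theorem then has to be invoked in the form with $p=m+1\ge 2$ boundary components; the paper proves this multi-boundary version separately by a tubing argument reducing to the classical $p=2$ case. Two smaller points: the existence of the seed map $g\colon\partial H\to\bS^2$ of arbitrary degree $n\ge 2$ is not free---one appeals to the Riemann--Hurwitz/Hurwitz existence theorem---and the outer boundary $\partial H$ is a genus-$g$ surface, not in general a torus.
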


The restriction for the degree in this theorem is essential and stems from the non-existence of degree $2$ branched covers $\bS^1\times \bS^1 \times \bS^1 \to \bS^3$; see Fox \cite{Fox-1972-branched} or Berstein--Edmonds \cite{Berstein-Edmonds-1978} for a general cohomological argument.

This paper is organized as follows. In Section \ref{sec:open_Whitehead}, Theorem \ref{thm:R3_W} is proved. We prove an extension of the Berstein--Edmonds theorem (Theorem \ref{thm:branch-extension}) in Section \ref{sec:Berstein-Edmonds} and prove an extension of the Heinonen--Rickman theorem (Theorem \ref{thm:BLD_degree}) in Section \ref{sec:Decomposition_spaces}. In Section \ref{sec:Decomposition_spaces} we also discuss construction of self-similar decomposition spaces and Semmes metrics. Finally, in Section \ref{sec:QR_non-ellipticity} we discuss the topological properties of self-similar decomposition spaces which yield quasiregular non-ellipticity results and prove Theorems \ref{thm:Whitehead non-BLD ellip} and \ref{thm:bing non-ellipitc}.

\section{Non BLD-ellipticity of the open Whitehead manifold}
\label{sec:open_Whitehead}

We use the notation from the introduction. Let $T=\bar B^2\times \bS^1$ and $T' = \bS^1 \times \bar B^2$ be solid $3$-tori in $\bS^3\subset \R^4$, with disjoint interiors, for which $T\cup T' = \bS^3$. We denote $S=\{0\}\times \bS^1$, $S' = \bS^1\times \{0\}$, and by $C\subset \interior T$ the curve forming the Whitehead link $L = S' \cup C$ with $S'$. Finally, let $\varphi \colon T \to T$ be a diffeomorphic embedding such that $\varphi(S) = C$ and $\varphi(T)\subset \interior T$. Denote $T_0=T$, $T_k=\varphi^{k}(T)$ for $k \ge 1$, and  $\Wh = \bigcap_{k\ge 0} T_k$.

Let $\cC = T_0 \setminus \Wh$. Then $\varphi\colon \cC \to \cC$ is an embedding and defines an equivalence relation $\sim$ on $\cC$ by formula $x\sim y$ if and only if exists $m,k\ge 0$ for which $\varphi^m(x)=\varphi^k(y)$. With respect to this equivalence relation it is easy to see that $\cC/\!\!\!\sim$ is a closed oriented smooth $n$-manifold. We denote $\cW=\cC/\!\!\!\sim$, and by $\pi \colon \cC \to \cW$ the induced quotient map.

We also observe that there exists an infinite cyclic cover $\cW_\infty$ of $\cW$ and an embedding $\iota \colon \cC \to \cW_\infty$ for which
the following diagram
\begin{equation}
\label{diag:iota}
\xymatrix{
\cC \ar[r]^\iota \ar[dr]_{\pi} & \cW_\infty \ar[d]^{\pi_\infty}\\
& \cW }
\end{equation}
commutes, where $\pi_\infty \colon \cW_\infty \to \cW$ is a covering map. We observe also that there exists a unique homeomorphism $\psi \colon \cW_\infty \to \cW_\infty$ 
\[
\xymatrix{
\cC \ar[d]_\varphi \ar[r]^\iota & \cW_\infty \ar[d]^\psi \\
\cC \ar[r]^\iota & \cW_\infty }
\]
satisfying $\psi \circ \iota = \iota \circ \varphi $.

\begin{proposition}
\label{prop:cW}
The fundamental groups $\pi_1(\cW)$ and $\pi_1(\cW_\infty)$ contain free groups of all finite ranks.
\end{proposition}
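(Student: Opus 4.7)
The plan is to identify the fundamental domain $F := T_0 \setminus \interior T_1$ of the $\varphi$-action as homotopy equivalent to the Whitehead link complement, and then to use the resulting HNN structure of $\pi_1(\cW)$ to embed $\pi_1(F)$ into both $\pi_1(\cW)$ and $\pi_1(\cW_\infty)$.

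First I would observe that $T_1 = \varphi(T_0)$ is a closed tubular neighborhood of $C$, so $F = T \setminus \interior \varphi(T)$ is a compact $3$-manifold with two torus boundary components $\partial T_0$ and $\partial T_1$. If $V \subset \interior T'$ is a closed tubular neighborhood of $S'$, then $T' \setminus \interior V$ is homeomorphic to $T^2 \times I$ and deformation retracts onto $\partial T$; adjoining it to $F$ along $\partial T$ yields $\bS^3 \setminus \interior(\varphi(T) \cup V)$, which is homotopy equivalent to $\bS^3 \setminus L$. Hence $F$ is a deformation retract of $\bS^3 \setminus L$, and $\pi_1(F) \cong \pi_1(\bS^3 \setminus L)$. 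Since the Whitehead link complement is a complete hyperbolic $3$-manifold of finite volume, its fundamental group is a non-elementary Kleinian group; a ping-pong argument on two loxodromic isometries with disjoint fixed-point sets produces a non-abelian free subgroup, and because the rank-$2$ free group contains free subgroups of every finite rank, so does $\pi_1(F)$.

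Next I would present $\cW$ as the quotient of $F$ obtained by identifying $\partial T_0$ with $\partial T_1$ via $\varphi|_{\partial T_0}$. This is an HNN construction, so
\[
\pi_1(\cW) \;\cong\; \langle\, \pi_1(F),\, t \;\big|\; t\alpha t^{-1} = \varphi_*(\alpha),\ \alpha \in \pi_1(\partial T_0)\,\rangle,
\]
with associated subgroups the two peripheral tori of $F$. Under $F \simeq \bS^3 \setminus L$ these correspond to the cusp subgroups of the Whitehead link complement, which are maximal parabolic and hence $\pi_1$-injective; by Britton's lemma $\pi_1(F) \hookrightarrow \pi_1(\cW)$. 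The infinite cyclic cover $\cW_\infty$ corresponds to the kernel of the surjection $\pi_1(\cW) \to \Z$ sending $t \mapsto 1$ and $\pi_1(F) \mapsto 0$; since $\pi_1(F)$ lies in this kernel, one also obtains $\pi_1(F) \hookrightarrow \pi_1(\cW_\infty)$. Combined with the first paragraph this yields free subgroups of every finite rank in both $\pi_1(\cW)$ and $\pi_1(\cW_\infty)$.

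The main obstacle will be verifying the peripheral $\pi_1$-injectivity needed to make the HNN extension faithful; this rests on the (standard but nontrivial) topological fact that the Whitehead link complement is boundary-incompressible, which is automatic from its hyperbolicity.
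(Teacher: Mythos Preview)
Your proof is correct and takes essentially the same approach as the paper: identify the fundamental domain $F = T_0 \setminus \interior T_1$ with the Whitehead link exterior, use peripheral $\pi_1$-injectivity of the two boundary tori (the paper cites \cite{Geoghegan-book}; you invoke hyperbolicity), and then embed $\pi_1(F)$ via the resulting graph-of-groups decomposition. The only packaging difference is that the paper writes $\pi_1(\cW_\infty)$ directly as the infinite amalgam $\cdots *_{\Z^2} \pi_1(D) *_{\Z^2} \pi_1(D) *_{\Z^2} \cdots$ and deduces the statement for $\cW$ from the covering $\cW_\infty \to \cW$, whereas you realize $\pi_1(\cW)$ as an HNN extension over $\pi_1(F)$, apply Britton's lemma, and then pass to the kernel $\pi_1(\cW_\infty)$.
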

\begin{proof}
Since $\cW$ is covered by $\cW_\infty$ it suffices to show the claim for $\pi_1(\cW_\infty)$.

Let $D=T\setminus \interior \varphi(T)$. Then $D$ is homotopy equivalent to the complement of the link $S'\cup \varphi(S)$ and $\pi_1(D)$ contains a free group. Note also that homomorphisms $\pi_1(\partial T) \to \pi_1(\partial D)$ and $\pi_1(\partial \varphi(T)) \to \pi_1(\partial D)$ induced by inclusions are one-to-one; see e.g.\;\cite[Section 16.4]{Geoghegan-book}. In addition, since $\pi_1(\partial T)$ is free abelian of rank $2$, there exists (infinitely many!) elements of $\pi_1(D)$ which are not in the image of either homomorphism.

Since $\cW_\infty = \bigcup_{k\in \Z} D_k$, where $D_k=\varphi^k(D)$, we have, by van Kampen's theorem, that the fundamental group of $\cW_\infty$ is the infinite product
\[
\cdots *_{\Z^2} \pi_1(D_{-2}) *_{\Z^2} \pi_1(D_{-1}) *_{\Z^2} \pi_1(D_0) *_{\Z^2 } \pi_1(D_1) *_{\Z^2} \pi_1(D_2) *_{\Z^2} \cdots
\]
which contains free groups of all finite ranks.
\end{proof}

By Proposition \ref{prop:cW}, $\pi_1(\cW)$ has exponential growth. Thus, by Varopoulos' theorem, $\cW$ and none of its covers is quasiregularly elliptic. However, the Whitehead manifold $\bS^3\setminus \Wh$ is not one of these covers; indeed, the Whitehead manifold is not a covering space of any closed manifold see \cite[Theorem 16.4.13]{Geoghegan-book}. The methods in the proof of Varopoulos' theorem can however be used to proof Theorem \ref{thm:R3_W}.

\begin{proof}[Proof of Theorem \ref{thm:R3_W}]
Suppose there exists a $L$-BLD map $g\colon \R^3\to (\bS^3\setminus \Wh,g)$. We observe immediately, by the path-lifting property of discrete open maps, that $(\bS^3\setminus \Wh,g)$ is complete. Thus $(\bS^3\setminus \Wh,g)$ is unbounded. Indeed, $(\bS^3\setminus \Wh,g)$ is one ended.

Since $(\bS^3\setminus \Wh,g)$ is unbounded and $f$ is BLD, we may fix a sequence $(x_k)$ in $\R^3$ for which the image of balls $fB^3(x_k,k)\subset T_1 \subset \cC$.

We fix a Riemannian metric $g_\infty$ on $\cW_\infty$ so that the embedding $\iota \colon (\cC,g) \to (\cW_\infty,g_\infty)$ is an isometry. By postcomposing $f$ with $\iota$ and precomposing with translations $x\mapsto x+x_k$ for every $k\ge 0$, we obtain $L$-BLD maps $f_k \colon B^3(0,k)\to (\cW_\infty,g_\infty)$.

Let $\widetilde{\cW_\infty}$ be the universal cover of $\cW_\infty$ and let $\widetilde{g_\infty}$ be the Riemannian metric induced by $g_\infty$, that is, $\widetilde{g_\infty}$ is the metric for which the covering map $\widetilde \pi\colon \widetilde{\cW_\infty} \to \cW_\infty$ is a local isometry. We fix also a point $x_0\in \widetilde{\pi}(f_0(0))$; note that $f_k(0)=f_0(0)$ for every $k\ge 0$.

We fix now lifts $\tilde f_k \colon B^3(0,k) \to \widetilde{\cW_\infty}$ of mappings $f_k$ so that $\tilde f_k(0)=x_0$. Then each $\tilde f_k$ is $L$-BLD and
\[
\bar B(\tilde f_k(0),k/L) \subset f\bar B^3(x_k,k)
\]
for every $k\ge 0$. By passing to a subsequence if necessary, the sequence $(\tilde f_k)$ converges locally uniformly to a (surjective) BLD-mapping $\tilde f\colon \R^n \to \widetilde{\cW_\infty}$.

Following now the proof of \cite[Theorem 1.3]{PR} almost verbatim, we observe that, by Proposition \ref{prop:cW},  $\widetilde{\cW_\infty}$ is conformally hyperbolic and the mapping $\tilde f$ is constant. This is a contradiction since $\tilde f$ is BLD and hence non-constant.
\end{proof}

\begin{remark}
By the proof of Theorem \ref{thm:R3_W} we observe that there are no proper quasiregular mappings $\R^3 \to (\bS^3\setminus \Wh,g)$ for any metric $g$.
\end{remark}

\section{Extension of branched covering maps}
\label{sec:Berstein-Edmonds}

In this section we prove a generalization of the Berstein--Edmonds extension theorem for $2$-dimensional branched covers.

\begin{theorem}\label{thm:branch-extension}
Let $W$ be a connected, compact, oriented PL $3$-manifold whose boundary consists of $p\ge 2$ components $M_0,...,M_{p-1}$ with the induced orientation.
Let $W'=N\setminus \bigcup_{j=0}^{j=p-1} \interior B_j$ be an oriented PL $3$-sphere $N$ in $\R^4$ with $p$ disjoint closed polyhedral $3$-balls $B_j$ removed, and have the induced orientation on its boundary.
Suppose that $n\ge 3$  and
$\varphi_j\colon M_j\to \partial B_j$ is a sense-preserving PL-branched cover of degree $n$, for each $j= 0,1,...,p-1$. Then there exists a sense-preserving PL-branched cover $\varphi \colon W \to W'$ of degree $n$ that extends the $\varphi_j$'s.
\end{theorem}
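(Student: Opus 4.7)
The plan is to reduce to the classical Berstein--Edmonds extension theorem for the case of a single boundary component (i.e., extending a degree-$n$ PL branched cover $\partial V \to \bS^2$ to $V \to B^3$, when $n \ge 3$) by performing matched surgeries that amalgamate the $p$ boundary components of $W$ and $W'$ into single connected ones.

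First, for each $j \in \{1,\ldots,p-1\}$, I would choose pairwise disjoint proper PL arcs $\beta_j \subset W'$ running from a regular value $b_j \in \partial B_0$ of $\varphi_0$ to a regular value $c_j \in \partial B_j$ of $\varphi_j$ (with interior in $\interior W'$), and take pairwise disjoint closed PL tubular neighborhoods $N'_j \cong D^2 \times [0,1]$ of them, with $N'_j \cap \partial B_0$ and $N'_j \cap \partial B_j$ small PL discs around $b_j$ and $c_j$.  Since $b_j, c_j$ are regular values, $\varphi_0^{-1}(b_j) = \{x_j^{(1)},\ldots,x_j^{(n)}\}$ and $\varphi_j^{-1}(c_j) = \{y_j^{(1)},\ldots,y_j^{(n)}\}$.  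On the source I would pick (after relabeling) pairwise disjoint proper PL arcs $\alpha_j^{(k)} \subset W$ from $x_j^{(k)}$ to $y_j^{(k)}$ for $k = 1,\ldots,n$, and disjoint closed PL tubular neighborhoods $N_j^{(k)} \cong D^2 \times [0,1]$ of them.  Shrinking the tubes if necessary, $\varphi_0$ and $\varphi_j$ map the end discs of each $N_j^{(k)}$ PL-homeomorphically onto the corresponding end discs of $N'_j$, and I would extend the boundary data across $N_j^{(k)}$ by any sense-preserving PL homeomorphism $N_j^{(k)} \to N'_j$.

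Next, I would cut out the tubes and set $W^* = W \setminus \bigcup_{j,k} \interior N_j^{(k)}$ and $W'^* = W' \setminus \bigcup_j \interior N'_j$.  The space $W'^*$ is the PL $3$-sphere $N$ with a tree-like union of closed PL $3$-cells (the $B_j$ joined by the tubes $N'_j$) removed, hence a PL $3$-ball bounded by a $2$-sphere.  The boundary $\Sigma = \partial W^*$ is obtained from $\bigsqcup_j M_j$ by attaching $n$ tubes between $M_0$ and each $M_j$ with $j \ge 1$, so it is a single closed oriented connected PL surface; and $W^*$ is connected since removing open $3$-cells preserves connectivity.  The combined boundary map $\varphi^* \colon \Sigma \to \partial W'^* \cong \bS^2$ is a sense-preserving PL branched cover of degree $n$: over each lateral cylinder of $N'_j$ it consists of $n$ sheet homeomorphisms coming from the lateral cylinders of the $N_j^{(k)}$, and elsewhere it agrees with $\varphi_0$ or the appropriate $\varphi_j$.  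Since $n \ge 3$, the classical PL Berstein--Edmonds extension theorem yields a sense-preserving PL branched cover $\tilde\varphi \colon W^* \to W'^* \cong B^3$ of degree $n$ extending $\varphi^*$, and gluing $\tilde\varphi$ with the homeomorphisms previously defined on the tubes yields the required $\varphi \colon W \to W'$.

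The main obstacle I anticipate is the matched choice of tubes in the second paragraph: the $N_j^{(k)}$ must be chosen simultaneously pairwise disjoint in $W$ and small enough that the boundary data $\varphi_0, \varphi_j$ extends across each as a homeomorphism onto the corresponding $N'_j$.  This relies on working with regular values of the $\varphi_j$ together with general position for PL arcs and standard regular-neighborhood theory in PL $3$-manifolds; once these matched surgeries are in place, the rest is routine.
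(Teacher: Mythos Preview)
Your proposal is correct and follows essentially the same tube-surgery strategy as the paper: define the map as $n$ sheet homeomorphisms on matched tubes connecting the boundary components, then invoke the Berstein--Edmonds extension theorem on the complement. The only differences are cosmetic---the paper connects $M_1,\ldots,M_{p-1}$ in a chain (leaving $M_0$ separate) so that the complement has two boundary components and the target is $\bS^2\times[0,1]$, whereas you connect $M_0$ to each $M_j$ in a star so that the complement has one boundary component and the target is $B^3$; both reduce to the same classical result.
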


The so-called Berstein--Edmonds extension theorem \cite[Theorem 6.2]{Berstein-Edmonds-1979} is the case $p=2$ of Theorem \ref{thm:branch-extension}.  When $p\ge 2$ and the degree $n =3i \ge n_0$, the theorem is due to Heinonen and Rickman \cite{Heinonen-Rickman-Duke}. Hirsch \cite{Hirsch-1977} has constructed $3$-fold branched covers from $W$ to $W'$ for the types of manifolds considered in Theorem \ref{thm:branch-extension}; however his method may not be used to extend every pre-assigned boundary branched cover $\partial W \to \partial W'$.

\begin{proof}[Proof of Theorem \ref{thm:branch-extension}]
Assume $p\ge 3$. For each $j=1,\ldots, p-1$, fix two disjoint closed $2$-disks $E_j$ and $ E_j'$ on $\partial B_j$ which contain no branch values. Therefore, each of $\varphi_j^{-1}E_j$ and $\varphi_j^{-1}E_j'$ consists of $n$ mutually disjoint $2$-disks in $M_j$; we label these disks by $D_{j,i}, \, i=1,\ldots, n, $ and $D_{j,i}',\,  i=1,\ldots, n,$ respectively.

Fix a collection of mutually disjoint PL closed $3$-dim cylindrical tubes $\{U_{j,i}\colon j=1,\ldots, p-2 \,\,\text{and}\,\, i=1,\ldots, n \}$ in $W$ such  that $U_{j,i}$ connects $D_{j,i}$ to $D_{j+1,i}'$, hence  $M_j$ to $M_{j+1}$. Similarly, choose a collection of mutually disjoint PL closed $3$-dim cylindrical tubes $\{V_j\colon j=1,\ldots, p-2\}$ in $W'$ such  that 
$V_j$ connects $E_j$ to $E_{j+1}'$,  hence $\partial B_j$ to $\partial B_{j+1}$.

Choose PL homeomorphisms $\alpha_{j,i} \colon B^2(0,1)\times [0,1] \to U_{j,i}\,$ for $ j=1,\ldots, p-2 \,\,\text{and}\,\, i=1,\ldots, n, $
and PL homeomorphisms $\beta_j \colon B^2(0,1)\times [0,1] \to V_j\,$ for $ j=1,\ldots, p-2, $ with the following properties:
\begin{enumerate}

\item[(i)] $\alpha_{j,i}(B^2(0,1)\times\{0\})=D_{j,i}$, $\,\,\alpha_{j,i}(B^2(0,1)\times\{1\})=D_{j+1,i}'$,
and $\,\,\alpha_{j,i}(B^2(0,1)\times (0,1)) \subset \interior W$;

\item[(ii)]  $\beta_j(B^2(0,1)\times\{0\})=E_j$, $\,\,\beta_j(B^2(0,1)\times\{1\})=E_{j+1}'\,$,
and $\,\beta_j(B^2(0,1)\times (0,1)) \subset \interior W'$;  and

\item[(iii)] $\beta_j \circ \alpha_{j,i}^{-1}|D_{j,i} = \varphi_j|D_{j,i}\,$ and
$\,\beta_{j+1} \circ \alpha_{j+1,i}^{-1}|D'_{j+1,i} = \varphi_{j+1}|D'_{j+1,i}$.
\end{enumerate}

Extend $\varphi_j$'s to a map $\Phi$ from $Q :=  \partial W \cup \bigcup_{j=1}^{ p-2} (\bigcup_{i=1}^{n} U_{j,i})$ to $Q' := \partial W' \cup \bigcup_{j=1}^{ p-2} V_j$ as follows: $\Phi|M_j=\varphi_j$ for each $j=0,\ldots, p-1$, and

\[
\Phi|(U_{j,i}\setminus \partial W) =\beta_j \circ \alpha_{j,i}^{-1}|(U_{j,i}\setminus \partial W)
\]
on every tube $U_{j,i}$.

The map $\Phi$ is well-defined in view of (iii). It is clear that $\Phi$ is an $n$-fold branched cover from $Q$ to $Q'$ and from each of the two boundary components of $W\setminus Q$ to the corresponding boundary component of $W' \setminus Q'$. Since $\overline{W'\setminus Q'}$ is homeomorphic to $\bS^2 \times [0,1]$ and $\partial (W'\setminus Q')$ consists of two $2$-spheres,  $\Phi$ can be extended to an $n$-fold branched cover from $W$ to $W'$ by the  Berstein--Edmonds theorem.

\end{proof}

\begin{remark}
\label{rmk:B-E_rmk}
Theorem \ref{thm:branch-extension} is false when degree is $2$. By a theorem of Fox \cite{Fox-1972-branched}, there is no $2$-fold branched covering from $\bS^1\times \bS^1 \times \bS^1$ to $\bS^3$; see also Berstein--Edmonds \cite{Berstein-Edmonds-1978} and \cite{Hirsch-Neumann}. Let $W$ be $\bS^1\times \bS^1 \times \bS^1$ with two disjoint closed $3$-balls removed, so $\partial W$ consists of two $2$-spheres. Let $W'$ be $\bS^3$ with two disjoint closed $3$-balls removed, so $\partial W$ also consists of two $2$-spheres. Then any $2$-fold branched covering map  $\partial W\to \partial W'$ can not be extended to $W\to W'$.
\end{remark}

\section{Initial packages, Semmes metrics, and  the Heinonen-Rickman existence theorem}
\label{sec:Decomposition_spaces}

In this section, we discuss the decomposition spaces $\bS^3/G$ arising from sequences of cubes-with-handles and the Semmes metrics on such spaces. As in \cite[Theorem 8.17]{Heinonen-Rickman-Duke}, an application of Theorem \ref{thm:branch-extension} shows that all these decomposition spaces admit branched covering maps $\bS^3/G \to \bS^3$. When spaces are equipped  with Semmes metrics, the branched covers may be chosen to be BLD. 

Let $H\subset \bS^3$ be a smooth cube-with-handles. Recall that a compact subset $H\subset \R^3$ is a cube-with-$g$-handles if $H$ has a handle decomposition into a $0$-handle and $g$ $1$-handles for $g\ge 0$.

A tuple $\fI = (H,H_1,\ldots, H_m, \varphi_1,\ldots, \varphi_m)$ is an \emph{initial package} if $H_i$ are pair-wise disjoint
cubes-with handles in the interior of $H$ and each $\varphi_i \colon H \to H_i$ is a homeomorphism which can be extended to be diffeomorphic in some neighborhood of $H$.

The decomposition space $\bS^3/G_\fI$ associated to an initial package $\fI=(H,H_1,\ldots, H_m, \varphi_1,\ldots, \varphi_m)$ is defined as follows. Denote $L_0 = H$ and $L_k = \bigcup_{i=1}^m \varphi_i(L_{k-1})$ for every $k\ge 1$. We call $(L_k)$ the \emph{associated sequence of $\fI$}. 

Let also $L_\infty = \bigcap_{k=1}^\infty L_k$ and denote by $G_\fI$ the decomposition of $\bS^3$ whose elements are the components of $L_\infty$ and singletons in $\bS^3\setminus L_\infty$. We denote by $\pi_\fI \colon \bS^3 \to \bS^3/G_\fI$ the canonical map. We call $L_\infty$ the \emph{limit set of $(L_k)$}.

For every $x\in \pi_{\fI}(L_\infty)$, there exists a unique component $L_k(x)$ of $L_k$ so that $x\in \pi_\fI(L_k(x))$. We call the sequence $(L_k(x))$ the \emph{branch of $x$}. Note that, for every $k\ge 1$, there exists a (unique) sequence $I^x_k=(i^x_1,\ldots, i^x_k)\in \{1,\ldots, m\}^k$ so that $\varphi_{i^x_1}\circ \cdots \circ \varphi_{i^x_k} \colon H \to L_k(x)$. We denote this map by $\varphi^x_k$.


Although $\bS^3/G_{\fI}$ is metrizable, there is no canonically defined metric. We equip $\bS^3/G$ with a natural metric (a Semmes metric) introduced in \cite{SemmesS:Goomsw}; see also \cite{HW}.

Given $\lambda\in (0,\infty)$ there exists a Riemannian metric $g_{\fI,\lambda}$ on $\bS^3\setminus L_\infty$ so that the mapping $\varphi_I =\varphi_{i_1}\circ \cdots \circ \varphi_{i_k}$ restricts to a $\lambda^k$-similarity $H\setminus L_1 \to L_k\setminus L_{k+1}$ for every $I=(i_1,\ldots, i_k)\in \{1,\ldots, m\}^k$.  Recall that a mapping $f\colon X\to Y$ between metric spaces is a \emph{$\mu$-similarity} if $|f(x)-f(x')|=\mu |x-x'|$ for all $x,x'\in X$.

Note that, by compactness of $\bS^3\setminus \interior L_0$ and $L_0\setminus \interior L_1$, the space $(\bS^3\setminus L_\infty, g_{\fI,\lambda})$ is quasiconformal to $(\bS^3\setminus L_\infty,g_{\fI,1})$ for every $\lambda\in (0,\infty)$.

Let $d_{\fI,\lambda}$ be the distance function associated to the Riemannian metric $g_{\fI,\lambda}$. For $0< \lambda<1$, the metric completion of $(\bS^3\setminus L_\infty, d_{\fI,\lambda})$ is a path metric space homeomorphic to $\bS^3/G$ and the embedding $\bS^3\setminus L_\infty \to \bS^3/G$ is an isometry.

We call any metric $d_\lambda$, that is bilipschitz equivalent to $d_{\fI,\lambda}$, a \emph{Semmes metric} and the space $(\bS^3/G_{\fI},d_\lambda)$ a \emph{Semmes space}. The parameter $\lambda$ is called the \emph{scaling constant of the metric}.

Using Theorem \ref{thm:branch-extension} we formulate the Heinonen--Rickman existence theorem \cite[Theorem 8.17]{Heinonen-Rickman-Duke} as an extension theorem, and give a new proof.

\begin{theorem}
\label{thm:geometric_Heinonen-Rickman}
Let $\fI=(H,H_1,\ldots, H_m,\varphi_1,\ldots, \varphi_m)$ be an initial package and $f \colon \partial H\to \bS^2$ a BLD-map of degree $n\ge 3$. Then there exists a Semmes metric $d_\lambda$ on $\bS^3/G_{\fI}$ and a BLD-map $F \colon (\bS^3/G_{\fI}, d_\lambda) \to \bS^3$ of degree $n$ so that $F|\partial H = f$.
\end{theorem}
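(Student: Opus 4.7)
The plan is to build $F$ piecewise via self-similarity on both source and target, and to assemble the pieces using two applications of Theorem~\ref{thm:branch-extension}. First, set up a self-similar packing in the target. Fix a polyhedral PL $3$-ball $B_0 \subset \bS^3$ with complementary ball $B_\infty := \bS^3 \setminus \interior B_0$, and fix PL similarities $\sigma_1, \ldots, \sigma_m$ of a common Euclidean scaling factor $\mu \in (0,1)$ (read in a PL chart on $B_0$) whose images $B_i := \sigma_i(B_0)$ are pairwise disjoint and contained in $\interior B_0$. For a multi-index $I = (i_1, \ldots, i_k)$ set $\sigma_I := \sigma_{i_1} \circ \cdots \circ \sigma_{i_k}$, $B_I := \sigma_I(B_0)$, and $P_I := \sigma_I(P_0) = B_I \setminus \bigcup_j \interior B_{I \cdot j}$.

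The first application of Theorem~\ref{thm:branch-extension} produces the level-zero piece. Let $A_0 := H \setminus \bigcup_i \interior H_i$; both $A_0$ and $P_0 := B_0 \setminus \bigcup_i \interior B_i$ are connected compact oriented PL $3$-manifolds with $m+1 \ge 2$ boundary components. Define $f_i := \sigma_i \circ f \circ (\varphi_i|_{\partial H})^{-1} \colon \partial H_i \to \partial B_i$, a PL branched cover of degree $n$. Applying Theorem~\ref{thm:branch-extension} yields a PL branched cover $F_0 \colon A_0 \to P_0$ of degree $n$ extending $f$ on $\partial H$ and each $f_i$ on $\partial H_i$. For the outside piece $\bS^3 \setminus \interior H \to B_\infty$ we puncture auxiliary polyhedral balls $\beta \subset \bS^3 \setminus H$ and $\beta' \subset \interior B_\infty$, choose a degree-$n$ PL branched cover $h \colon \partial \beta \to \partial \beta'$, and use the $p=2$ (Berstein--Edmonds) case of Theorem~\ref{thm:branch-extension} to extend $f$ and $h$ to a degree-$n$ branched cover $(\bS^3 \setminus \interior H) \setminus \interior \beta \to B_\infty \setminus \interior \beta'$; coning $h$ across $\beta \to \beta'$ yields $F_\infty \colon \bS^3 \setminus \interior H \to B_\infty$. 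Finally iterate: for each multi-index $I$ set
\[
F|_{\varphi_I(A_0)} := \sigma_I \circ F_0 \circ \varphi_I^{-1},
\]
which maps onto $P_I$, and note that adjacent pieces agree along every interface $\partial H_{I \cdot j}$ by the definition of the $f_j$.

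Each component $C$ of $L_\infty$ corresponds to a nested sequence $H_{I_1} \supset H_{I_2} \supset \cdots$ whose $F$-images $B_{I_k}$ satisfy $\diam B_{I_k} = \mu^k \diam B_0 \to 0$, so sending $C$ to $\bigcap_k B_{I_k}$ gives a continuous extension $F \colon \bS^3/G_\fI \to \bS^3$ of degree $n$ with $F|_{\partial H} = f$. Fix the Semmes scaling $\lambda := \mu$. Then $\varphi_I$ is a $\lambda^k$-similarity in $d_{\fI, \lambda}$ while $\sigma_I$ is a $\mu^k = \lambda^k$-similarity in $\bS^3$, so every $F|_{\varphi_I(A_0)}$ is BLD with the same constant as $F_0$, and $F_\infty$ is BLD by compactness. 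The main remaining difficulty is the global BLD estimate: a rectifiable path in $(\bS^3/G_\fI, d_\lambda)$ can accumulate at a point arising from $L_\infty$ while traversing infinitely many level-$k$ pieces. The matched $\mu^k$-scaling in the target together with the geometry of $d_{\fI, \lambda}$ forces these length contributions to telescope uniformly, so the per-piece BLD constants promote to a single global BLD constant for $F$.
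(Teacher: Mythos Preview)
Your proof is correct and follows essentially the same approach as the paper: build a target initial package of balls with common similarity ratio $\mu$, use Theorem~\ref{thm:branch-extension} to obtain the level-zero piece $F_0$ and the exterior piece (via an auxiliary ball and coning), then propagate by self-similarity $F|_{\varphi_I(A_0)}=\sigma_I\circ F_0\circ\varphi_I^{-1}$ and extend across the limit set. The paper's version is terser on the final BLD step---it just notes that the map is Lipschitz on the complement and that $\pi_{\fI}(L_\infty)$ is compact and $0$-dimensional---whereas you spell out the per-piece length argument more explicitly.
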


\begin{proof}
Let $\mathcal L=(L_k)_{k\ge 0}$ be the sequence associated to $\fI$ and $L_\infty$ the limit set of $\mathcal L$.
We may assume that $H$ is contained in the interior of the unit ball $\mathbb B^3$. Fix a number $\lambda \in (0,1)$ so that $\interior \mathbb B^3$ contains $m$ pair-wise disjoint closed balls of radius $\lambda$. We show that for a Semmes metric with  scaling $\lambda$, the space $(\bS^3/G_{\fI}, d_\lambda)$ admits a BLD-map as claimed.

Choose another initial package $\fI'=(B,B_1,\ldots, B_m,\psi_1,\ldots, \psi_m)$, with $B=\mathbb B^3$, $(B_i)_{1\le i\le m}$ pair-wise disjoint closed balls of radius $\lambda$ in  $\interior B$, and $\psi_i\colon B \to B_i$ similarity maps. Let $\mathcal L'=(L'_k)_{k\ge 0}$ be the sequence associated to $\fI'$ and  $L'_\infty$ the limit set of $\mathcal L'$.

Set $g_i = \psi_i \circ f \circ \varphi_i^{-1}|\partial H_i$ for every $1\le i \le m$, and fix a degree $n$ BLD-map $g_{0}\colon \partial B^3(0,2) \to \partial B^3(0,2)$.

By Theorem \ref{thm:branch-extension}, there exist sense preserving degree $n$ extensions $G_0 \colon L_0\setminus \interior L_1 \to  L'_0\setminus \interior L'_1$ and $G_{-1} \colon B^3(0,2)\setminus \interior L_0 \to B^3(0,2) \setminus \interior L'_0$ satisfying $G_0|\partial L_0 = f = G_{-1}|\partial L_0$, $G_0|\partial H_i = g_i$, and $G_{-1}|\partial B^3(0,2) =g_{0}$. The mapping $g_0$ may be extended to a degree $n$ BLD branched cover $G_{-2}\colon \bS^3\setminus \interior B^3(0,2)\to \bS^3\setminus \interior B^3(0,2)$, by coning with respect to a fixed point in $\bS^3\setminus B^3(0,2)$,

Having mappings $G_0$, $G_{-1}$ and $G_{-2}$ at our disposal, we may find a degree $n$ BLD branched cover $G \colon \bS^3\setminus L_\infty \to \bS^3\setminus L'_\infty$  satisfying $G \circ \varphi_I = \psi_I \circ G_0$
\[
\xymatrix{
L_0\setminus \interior L_1 \ar[r]^{G_0} \ar[d]_{\varphi_I} & L'_0\setminus \interior L'_1 \ar[d]^{\psi_I} \\
L_k \setminus \interior L_{k+1} \ar[r]^G & L'_k \setminus \interior L'_{k+1}}
\]
for every $I=(i_1,\ldots, i_k)\in \{1,\ldots, m\}^k$, where $\varphi_I=\varphi_{i_1}\circ \cdots \circ \varphi_{i_k}$ and $\psi_I$ is a $\lambda^k$-similarity.

Since $G$ induces a Lipschitz map from $(\bS^3\setminus \pi_{\fI}( L_\infty), d_\lambda)$ to $\bS^3 \setminus L'_\infty$ and $\pi_\fI(L_\infty)$ is compact and $0$-dimensional \cite[Proposition II.9.1]{DavermanR:Decm}, $G$ extends to a BLD branched covering map $F \colon \bS^3/G \to \bS^3$.
\end{proof}

Theorem \ref{thm:BLD_degree} follows from Theorem \ref{thm:geometric_Heinonen-Rickman} and the remark below.

\begin{remark}\label{rmk:exitence-branched-cover}
Let $H$ be a cube-with-handles in $\R^3$ of genus $g $. The existence of a degree $n\, (\ge 2)$ BLD-map $f \colon \partial H\to \bS^2$ follows from the Hurwitz Existence Theorem. We outline the steps as given in \cite[(2.2)]{Berstein-Edmonds-1979}.  

Fix an orientation of $\bS^2$, a set of $k=2+2g$ points $\{q_1,\ldots,q_{k}\}$ in $\bS^2$, and a base point $* \in \bS^2\setminus \{q_1,\ldots,q_{k}\} $. Then $\pi_1(\bS^2\setminus \{q_1,\ldots,q_{k}\},*)$ is a free group on $k$ generators $x_1, \ldots, x_k$ modulo the relation $x_1 x_2\ldots x_k=1$. Denote by $\mathscr S_n$ the symmetric group on $n$ letters. We fix a homomorphism
\[
\rho\colon \pi_1(\bS^2\setminus  \{q_1,\ldots,q_{k}\}, *) \to \mathscr S_n, 
\]
with the properties that   $\rho (x_1) =\rho (x_2)^{-1}$ is a cycle of length $n$ and $\rho (x_3) =\cdots= \rho(x_k)$ is a transposition. Then   $\bS^2\setminus  \{q_1,\ldots,q_{k}\}$ has a PL $n$-fold cover $\Omega$ whose fundamental group is isomorphic to $\text{ker}\,\rho $ and  
\[
\bS^2\setminus  \{q_1,\ldots,q_{k}\} \approx \Omega/ \rho(\pi_1(\bS^2\setminus  \{q_1,\ldots,q_{k}\}, *)).
\] 
Denote by $M$ the PL closed $2$-manifold which is the compactification of $\Omega$. The unbranched covering extends to a degree $n$ PL map $\varphi\colon M \to \bS^2$  whose local degrees at the branch points $p_1,\ldots,p_k$ are $\deg(\varphi, p_1)=\deg(\varphi, p_2)=n$ and $\deg(\varphi, p_3)=\cdots=\deg(\varphi, p_k)=2$. By the Riemann-Hurwitz condition,  
\[
\chi(M) = n\chi(\bS^2) -\sum_i \, (\deg (\varphi, p_i)-1).
\]
Thus the Euler characteristic $\chi(M)$ of $M$ is $2-2g$. Since $\rho$ is transitive, $M$ is connected. Furthermore, since $\bS^2$ is orientable, so are $\Omega$ and its compactification $M$. Therefore, $\partial H$ and $M$ are PL-homeomorphic and the existence of a degree $n$ BLD-map $f$ follows. 
\end{remark}

\begin{remark}
The sequence $(L_k)$ associated to an initial package $\fI$ is a (self-similar) defining sequence for the decomposition space $\bS^3/G_{\fI}$. More generally, a sequence $\mathcal{X}=(X_k)$ is a \emph{defining sequence} in $\bS^3$ if each $X_k$ is a closed set satisfying $X_{k+1} \subset \interior X_k$. We refer to Daverman \cite{DavermanR:Decm} for details on decompositions, quotient spaces associated to decompositions, and their topological applications.

The construction of Semmes metrics can be carried over on decomposition spaces associated to defining sequences given by cubes-with-handles. If, in addition, the defining sequence has \emph{finite type}, as defined in \cite[Section 4]{PW}, the associated Semmes spaces $(\bS^3/G,d_\lambda)$ admit good geometric properties. For example, the proof of Theorem \ref{thm:geometric_Heinonen-Rickman} extends to this class of spaces almost verbatim; note that sequences $\mathcal L$ and $\mathcal L'$ in the proof of Theorem \ref{thm:geometric_Heinonen-Rickman} are defining sequences of finite type.
\end{remark}


\section{Quasiregularly Non-Elliptic Decomposition Spaces}
\label{sec:QR_non-ellipticity}

Let $\fI=(H,H_1,\ldots, H_m, \varphi_1,\ldots, \varphi_m)$ be an initial package and $(L_k)$ the associated sequence as in Section \ref{sec:Decomposition_spaces}, i.e\;$L_0=H$ and $L_k = \bigcup_{i=1}^m \varphi_i(L_{k-1})$ for $k\ge 1$.

Under the assumption that components of $L_{k+1}$ are contractible in $L_k$ for every $k\ge 0$, spaces $\R^3/G_\fI$  and $\bS^3/G_\fI$ are generalized (homology) $3$-manifolds by \cite[Corollary V.1A]{DavermanR:Decm} and orientable generalized (cohomology) $3$-manifolds in the sense of Heinonen and Rickman; see \cite[Definition 1.6, Example 1.4(c), and Proposition 8.16]{Heinonen-Rickman-Duke}. In particular, these spaces admit local degree theory as discussed in \cite[Section 1]{Heinonen-Rickman-Duke}.

We assume from now on that $(\bS^3/G_{\fI},d_\lambda)$ is a Semmes space which is a generalized cohomology $3$-manifold in the sense of Heinonen and Rickman.

We define quasiregular mappings $\R^3\to \bS^3/G_{\fI}$ using the so-called \emph{metric definition} (see \cite{OR}). Let $f\colon \R^3 \to \bS^3/G_{\fI}$ be a continuous map. The \emph{(metric) distortion of $f$ at $x\in \R^3$} is
\[
H_f(x)=\limsup_{r \to 0} \frac{L(x,r)}{\ell(x,r)},
\]
where
\[
L(x,r)=\max_{y \in \overline{B}(x,r)} |f(y)-f(x)|
\quad \mathrm{and} \quad \ell(x,r)=\min_{y \in S(x,r)} |f(y)-f(x)|.
\]

A non-constant continuous map $f\colon \R^3 \to \bS^3/G_{\fI}$ is \emph{quasiregular} if $f$ is sense-preserving, discrete and open, and if there exists $H<\infty$ for which $H_f(x)\leq H$ for almost every $x \in \R^3$ and $H_f(x)< \infty$ for every $x \in \R^3$.

Since quasiregular mappings $\R^3\to \bS^3/G_{\fI}$ are branched covers, that is, discrete and open mappings, they have the path-lifting property (\cite[Section 3.3]{Heinonen-Rickman-Duke}.

As in the classical theory, we have \emph{Poletsky's inequality} for non-constant quasiregular mappings $\R^3 \to \bS^3/G_{\fI}$ (\cite[Corollary 11.2]{OR}): Let $\Gamma$ be a path family in $\R^3$ and $f\colon \R^3 \to \bS^3/G_{\fI}$ a non-constant quasiregular map. Then
\begin{equation}
\label{eq:pole}
\Mod_3(f (\Gamma)) \leq C \Mod_3(\Gamma),
\end{equation}
where $C$ depends on the distortion of $f$.

Recall that the \emph{conformal modulus} $\Mod_3(\Gamma)$ of a family $\Gamma$ in $\R^3$ is
\[
\Mod_3(\Gamma) = \inf_{\rho} \int_{\R^3} \rho(x)^3 \dx,
\]
where $\rho$ is a non-negative Borel function satisfying
\[
\int_{\gamma} \rho \ds \geq 1 \quad \text{for all locally rectifiable } \gamma \in \Gamma.
\]
The same definition, after replacing $\mathrm{d}x$ by integration over Hausdorff $3$-measure in $\bS^3/G_{\fI}$, gives the conformal modulus $\Mod_3(f\Gamma)$ of $f\Gamma$.

The rest of this section is devoted to the proof of a general non-ellipticity result (Theorem \ref{thm:QR non-ellip decomp}) which covers Theorem \ref{thm:bing non-ellipitc}.

We show that, under the assumptions of Theorem \ref{thm:QR non-ellip decomp}, the image of a non-constant quasiregular map does not contain points of $\pi_{\fI}(L_\infty)$, where $L_\infty$ is the limit of the sequence $(L_k)$ associated to $\fI$. In particular, the image is an $n$-manifold that has infinitely many ends; recall that a manifold $M$ has at least $q$ ends if there exists a compact set $E\subset M$ for which the set $\overline{M\setminus E}$ has at least $q$ non-compact components.
This contradicts the Holopainen-Rickman Picard theorem for quasiregular mappings \cite{HolopainenRickman:ricci} which states that \emph{for every $K\ge 1$ and $n\ge 2$ there exists $q=q(n,K)$ so that a manifold $M$ has at most $q$ ends if there exists a non-constant quasiregular mapping $\R^n \to M$.}


\subsection{Circulation}

For the statement of Theorem \ref{thm:QR non-ellip decomp} we define the notion of circulation. Let $\fI=(H,H_1,\ldots, H_m,\varphi_1,\ldots,\varphi_m)$ be an initial package, $(L_k)$ the associated sequence and $L_\infty$ the limit set.

A simple closed smooth curve $\alpha \colon \bS^1 \to \partial H$ is a \emph{meridian of $H$} if  $\alpha$ is not contractible on $\partial H$ but there exists a continuous map $\varphi\colon \bB^2 \to H$ so that $\varphi|\partial \bB^2 = \alpha$. We denote the collection of such maps by $\cE(H,\alpha)$.

Given $k\ge 0$, a \emph{longitude in $L_k$} is an unweighted $PL$ $1$-cycle (i.e. a sum of finitely many closed PL-curves) $\sigma$ in $L_k$ with the property that
\[
|\sigma| \cap \varphi(\bB^2)  \neq \emptyset
\]
for all meridians $\alpha$ of $H$, and all $\varphi \in \cE(H,\alpha)$. In other words, longitudes are the $1$-cycles in $L_k$ that are linked with meridians of all cubes-with-handles of $L_k$. We denote the set of longitudes by $\Sigma(L_k)$.

We say that the \emph{order of circulation of $\fI$ is at least $\omega\ge 0$} if there exists a meridian $\alpha$ of $H$ and a constant $C>0$ so that
\[
\wind(L_k,\alpha,H) := \min_{\varphi\in \cE(H,\alpha)} \min_{\sigma\in \longi(L_k)} \# (|\sigma| \cap \varphi(\bB^2)) \ge C \omega^k
\]
for every $k\ge 0$.

\begin{remark}
In many concrete examples of initial packages the order of circulation is easy to estimate. For example, for the initial packages associated to the Bing double and the Whitehead continuum has circulation at least $2$ by a lemma of Freedman and Skora (\cite[Lemma 2.4]{FreedmanM:Strags}); see \cite[Section 17]{PW} for a more detailed discussion. Note also that the definition \cite[Definition 9.2]{PW} for circulation for defining sequences of finite type and is more general than the definition we consider here.
\end{remark}

\subsection{Statements}

The following theorem is the main result of this section; recall that, by discussion in the beginning of Section \ref{sec:QR_non-ellipticity}, the quotient space $\bS^3/G_{\fI}$ is orientable under the contractibility assumption posed to the initial package.

\begin{theorem}
\label{thm:QR non-ellip decomp}
Let $\fI=(H,H_1,\ldots, H_m,\varphi_1,\ldots, \varphi_m)$, $m\geq 2$, be an initial package so that $H_i$ is contractible in $H$ for each $i=1,\ldots, m$, and let $d_\lambda$ be a Semmes metric with scaling constant $0<\lambda<1$ on $\bS^3/G_{\fI}$. Suppose that $\fI$ has order of circulation at least $\omega>m^{2/3}$. Then every quasiregular map $\R^3\to (\bS^3/G_{\fI},d_\lambda)$ is constant.
\end{theorem}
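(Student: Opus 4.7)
The plan is to argue by contradiction and reduce matters to the Holopainen--Rickman Picard theorem. Suppose $f\colon \R^3 \to (\bS^3/G_\fI, d_\lambda)$ is non-constant quasiregular. I would first show that $f(\R^3)$ is disjoint from $\pi_\fI(L_\infty)$. Granting this, $f$ factors as a non-constant quasiregular map into the open smooth $3$-manifold $M := \bS^3/G_\fI \setminus \pi_\fI(L_\infty)$. The contractibility hypothesis $H_i \subset H$ together with $m \ge 2$ ensures that distinct branches of the defining tree are topologically separated in $M$, so $M$ inherits at least $m^k$ ends at every level $k$; in particular $M$ has infinitely many ends. The Holopainen--Rickman Picard theorem then forces $f$ to be constant, a contradiction. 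Hence the heart of the proof is the omission.

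To prove $f(\R^3) \cap \pi_\fI(L_\infty) = \emptyset$, I would argue by contradiction, assuming $f(x_0) = p \in \pi_\fI(L_\infty)$ for some $x_0 \in \R^3$. Fix the branch $(L_k(p))_{k \ge 0}$ together with the similarities $\varphi^p_k \colon H \to L_k(p)$ produced by the Semmes construction, and fix a meridian $\alpha$ of $H$ with spanning disk $\varphi \in \cE(H,\alpha)$ realizing the circulation lower bound $\wind(L_k, \alpha, H) \ge c_0 \omega^k$. For each $k$ set $D_k := \pi_\fI \circ \varphi^p_k \circ \varphi(\bB^2) \subset \pi_\fI(L_k(p))$ and let $U_k$ be a Semmes neighbourhood of $\pi_\fI(L_k(p))$ of diameter comparable to $\lambda^k$. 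Using path-lifting for the branched cover $f$, choose a relatively compact neighbourhood $V_k$ of $x_0$ such that $f|V_k$ covers $U_k$ properly, and let $\Gamma_k$ be the family of paths in $V_k$ joining the $x_0$-component of $f^{-1}(D_k)\cap V_k$ to $V_k \setminus f^{-1}(U_k)$.

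The proof then hinges on two competing estimates for $\Mod_3(\Gamma_k)$. Self-similarity of the Semmes metric places $f\Gamma_k$ inside a rescaled copy of the fixed configuration $(D_0, \partial U_0)$ in $\bS^3/G_\fI$, and conformal invariance of $\Mod_3$ in this setting gives $\Mod_3(f\Gamma_k) \le C_0$ uniformly in $k$; Poletsky's inequality \eqref{eq:pole} then yields $\Mod_3(\Gamma_k) \le K C_0$. For the opposing lower bound, the circulation hypothesis says that every longitude $\sigma \in \longi(L_k)$ crosses $\varphi(\bB^2)$ at least $c_0\omega^k$ times. Combining a Gehring-type linking argument with the Semmes self-similarity, and controlling the multiplicity of $f$ above $U_k$ by the number $\sim m^k$ of components of $L_k$ through a covering/area count, one would extract at least $c_1(\omega/m^{2/3})^k$ mutually essentially disjoint subfamilies of $\Gamma_k$, each of universally bounded modulus. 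Superadditivity of the $3$-modulus on disjoint subfamilies then gives
\[
\Mod_3(\Gamma_k) \ge c_1 \left(\frac{\omega}{m^{2/3}}\right)^k.
\]
Combined with the Poletsky upper bound, this forces $(\omega/m^{2/3})^k \le KC_0/c_1$ for every $k$, contradicting $\omega > m^{2/3}$.

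The principal obstacle is the lower bound on $\Mod_3(\Gamma_k)$, where the purely topological circulation hypothesis inside $L_k$ has to be translated into a quantitative $\R^3$-modulus lower bound with the correct balance against the sheet multiplicity of $f$. The exponent $2/3$ arises precisely from the interplay between the three-dimensional nature of conformal modulus and the two-dimensional linking obstruction supplied by the meridian disks distributed among the $m^k$ components of $L_k$. Carrying this out requires adapting the Whitehead-specific linking geometry of Theorem \ref{thm:omit} and the arguments of \cite{HW,PW} to the abstract circulation framework.
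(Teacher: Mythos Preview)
Your overall architecture matches the paper exactly: reduce Theorem~\ref{thm:QR non-ellip decomp} to the omission statement (Theorem~\ref{thm:omit}) and then invoke the Holopainen--Rickman Picard theorem, using that for $m\ge 2$ the set $\pi_\fI(L_\infty)$ is a Cantor set so its complement in $\bS^3/G_\fI$ has infinitely many ends.

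The proposed proof of the omission step, however, contains a genuine error: you apply Poletsky's inequality in the wrong direction. Inequality~\eqref{eq:pole} reads $\Mod_3(f\Gamma)\le C\,\Mod_3(\Gamma)$, so a uniform upper bound on $\Mod_3(f\Gamma_k)$ does \emph{not} give an upper bound on $\Mod_3(\Gamma_k)$. Consequently your two estimates (bounded image modulus, growing domain modulus) cannot be combined via Poletsky into a contradiction.

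The paper's argument reverses the roles, and also uses a different curve family. One works with the family $\Sigma(L_k)$ of \emph{longitudes} in the target and its lift $\Sigma(L_k)^*$ to a normal neighbourhood $U$ of $p$ in $\R^3$; Poletsky now reads $\Mod_3(\Sigma(L_k))\le C\,\Mod_3(\Sigma(L_k)^*)$. The \emph{lower} bound $\Mod_3(\Sigma(L_k))\ge C m^{-2k}$ lives on the target side and comes from the Semmes geometry via \cite{HW,PW}; this is where the exponent $2$ enters. The \emph{upper} bound $\Mod_3(\Sigma(L_k)^*)\le C\omega^{-3k}$ is obtained in $\R^3$: one lifts the meridian $\alpha$ through $U$ to a PL loop, cones it to a PL disk $\xi(\bB^2)\subset\R^3$, and observes that every lifted longitude $\sigma^*$ meets each parallel translate $\xi(\bB^2)+y$ at least $C_0\omega^k$ times (since its image meets the corresponding meridian disk that often). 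A projection argument then gives $\haus^1(|\sigma^*|)\ge M\omega^k$, so the constant test function $M^{-1}\omega^{-k}$ on a $\delta$-thickening of $\xi(\bB^2)$ is admissible and yields the $\omega^{-3k}$ bound. Combining via Poletsky gives $m^{-2k}\le C\omega^{-3k}$, contradicting $\omega>m^{2/3}$.

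In short, circulation controls the \emph{length} of longitude lifts in $\R^3$, which is naturally an \emph{upper} modulus bound on the domain side; your connecting family $\Gamma_k$ does not interface with the circulation hypothesis in this way, and the ``disjoint subfamilies'' mechanism you sketch for the domain lower bound is not what drives the argument.
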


Since, for the Bing double, $m=2$ and the order of circulation is at least $2$, Theorem \ref{thm:bing non-ellipitc} follows from Theorem \ref{thm:QR non-ellip decomp}.

\begin{remark}
By modifying the method in \cite{PW}, we may generalize Theorem \ref{thm:QR non-ellip decomp} to include the class of defining sequences of finite type. Here we consider only the self-similar sequences for simplicity.
\end{remark}

As discussed above, Theorem \ref{thm:QR non-ellip decomp} is deduced by combining the Holopainen--Rickman Picard theorem for quasiregular mappings and the following result, which forces a quasiregular mapping to omit points when the circulation is large.

\begin{theorem}
\label{thm:omit}
Let $\fI=(H,H_1,\ldots, H_m,\varphi_1,\ldots, \varphi_m)$, $m\geq 1$, be an initial package so that $H_i$ is contractible in $H$ for each $i=1,\ldots, m$, and let $d_\lambda$ be a Semmes metric with scaling constant $0<\lambda<1$ on $\bS^3/G_{\fI}$. Suppose that $\fI$ has order of circulation at least $\omega>m^{2/3}$. Then $\pi_{\fI}(L_\infty) \cap f(\R^3) = \emptyset$ for all non-constant quasiregular mappings $f\colon \R^3 \to (\bS^3/G_{\fI},d_\lambda)$.
\end{theorem}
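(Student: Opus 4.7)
The plan is a proof by contradiction following the modulus/linking strategy of \cite{HW}, \cite{PW}, and Gehring \cite{Gehring}. Assume $f\colon \R^3\to(\bS^3/G_\fI,d_\lambda)$ is a non-constant quasiregular mapping with $f(x_0)=p_0\in\pi_\fI(L_\infty)$ for some $x_0\in\R^3$. The goal is to construct a sequence $\Gamma^*_j$ of path families in the target, concentrated near $p_0$, whose $3$-modulus grows like $(\omega/m^{2/3})^j$, and then to bound this modulus via Poletsky's inequality \eqref{eq:pole} by a uniformly controlled $3$-modulus of lifted families in $\R^3$; the hypothesis $\omega>m^{2/3}$ then forces a contradiction as $j\to\infty$.

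I would first set up the local geometry near $x_0$ and $p_0$. Let $(L_k(p_0))_{k\ge 0}$ be the branch of $p_0$ and $\varphi^{p_0}_k\colon H\to L_k(p_0)$ the associated compositions, which are $\lambda^k$-similarities on $H\setminus L_1$ in the Semmes metric. The discreteness of $f^{-1}(p_0)$ together with the discrete open property of $f$ produces a decreasing sequence of normal neighborhoods $U_k$ of $x_0$ with $\diam U_k\to 0$, on which $f|_{U_k}$ is a proper branched cover of fixed local degree $N=i(x_0,f)$ onto $\pi_\fI(\inter L_k(p_0))$. By the contractibility of the $H_i$ in $H$, each $U_k$ is a cube-with-handles, and for every $j\ge 0$ there are at most $N m^j$ components of $f^{-1}(\pi_\fI(\inter L_{k+j}(p_0)))\cap U_k$. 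Fixing a meridian $\alpha\subset\partial H$ realizing the circulation bound and a spanning disk $\varphi(\bB^2)\subset H$, I push them forward by $\varphi^{p_0}_k$ to obtain a meridian $\alpha_k=\varphi^{p_0}_k(\alpha)$ and spanning disk $D_k=\varphi^{p_0}_k(\varphi(\bB^2))$; self-similarity gives $\#(|\sigma|\cap D_k)\ge C\omega^j$ for every longitude $\sigma$ of $L_{k+j}(p_0)$.

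Taking $\Gamma^*_j$ to be the family of locally rectifiable paths in $\pi_\fI(L_k(p_0))$ joining $\partial\pi_\fI(L_{k+j}(p_0))$ to $\partial\pi_\fI(L_k(p_0))$ and having non-trivial algebraic intersection with $\pi_\fI(D_k)$, I would perform a self-similar shell-by-shell modulus computation producing the geometric lower bound
\[
\Mod_3(\Gamma^*_j)\,\ge\, c_0\,(\omega/m^{2/3})^{\,j},
\]
where each shell $\pi_\fI(L_{k+i}(p_0)\setminus\inter L_{k+i+1}(p_0))$ contributes a factor $\omega/m^{2/3}$ coming from balancing the $\omega$ mandatory meridional crossings of the $m$ child cubes-with-handles against the $\lambda^3$-scaling of the $3$-Hausdorff measure. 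Lifting the paths to $U_k$ via the path-lifting property yields a family $\Gamma_j\subset U_k\subset\R^3$; Poletsky's inequality then gives $\Mod_3(\Gamma^*_j)\le K\Mod_3(\Gamma_j)$. In the Euclidean source no winding obstruction is present: the preimage cubes-with-handles at level $k+j$ sit inside the contractible region $U_k$ and the analogous shell-by-shell estimate collects only the multiplicity factor $m^j$ and not the circulation factor, producing an upper bound of the form $\Mod_3(\Gamma_j)\le C(m^{2/3}/\omega)^j$. Combining the estimates forces $(\omega/m^{2/3})^{2j}\le C/c_0$, which fails for large $j$ under the assumption $\omega>m^{2/3}$.

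The technical heart, and the main obstacle, is the derivation of the per-shell factor $\omega/m^{2/3}$. It balances the topological linking forced by circulation against the $3$-dimensional spread of the $m$ child pieces at scale $\lambda$, and it is precisely the exponent $2/3$ that makes the hypothesis $\omega > m^{2/3}$ decisive, both in the lower bound for $\Mod_3(\Gamma^*_j)$ in the target and in the corresponding upper bound for $\Mod_3(\Gamma_j)$ in the Euclidean source. I would carry out this estimate by adapting the shell modulus computations of \cite[Sections 9--17]{PW}, whose framework of defining sequences of finite type specializes cleanly to the self-similar setup considered here.
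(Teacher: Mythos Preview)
Your overall architecture---contradiction via Poletsky and a comparison of moduli---matches the paper, but you have reversed the roles of circulation and of the two curve families, and this makes the central estimates false as stated.

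The claimed lower bound $\Mod_3(\Gamma^*_j)\ge c_0(\omega/m^{2/3})^j$ cannot hold. Your family $\Gamma^*_j$ consists of paths in the fixed compact set $\pi_\fI(L_k(p_0))$ joining $\partial\pi_\fI(L_{k+j}(p_0))$ to $\partial\pi_\fI(L_k(p_0))$; since $L_{k+j}\subset L_{k+1}$, every such path has length at least $\dist(\partial L_{k+1},\partial L_k)>0$, and the family lives in a region of finite $3$-measure, so $\Mod_3(\Gamma^*_j)$ is bounded above by a constant independent of $j$ and certainly does not tend to infinity. More conceptually, circulation forces curves to be \emph{long} (many mandatory crossings of meridian disks), and long curves give \emph{small} modulus, not large. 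Your source estimate is also internally inconsistent: you say that in $\R^3$ ``no winding obstruction is present'' and that the shell estimate ``collects only the multiplicity factor $m^j$ and not the circulation factor,'' yet the bound $C(m^{2/3}/\omega)^j$ you write down contains $\omega$.

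In the paper the comparison runs the other way, and the relevant family is the family $\Sigma(L_k)$ of \emph{longitudes} (unweighted $1$-cycles linked with every meridian of $L_k$), not connecting paths. The lower bound $\Mod_3(\Sigma(L_k))\ge Cm^{-2k}$ in the target uses only the self-similar geometry (it is quoted from \cite[Proposition~4.5]{HW} and \cite[Proposition~12.1]{PW}) and does not involve $\omega$ at all. Circulation enters only on the \emph{source} side: one lifts iterates of a meridian $\hat\alpha$ (chosen off the branch image) through the normal neighborhood $U$ to obtain a closed PL loop $\gamma$ in $\R^3$, cones $\gamma$ to a PL disk $\xi(\bB^2)\subset\R^3$, and shows that every lifted longitude $\sigma^*\in\Sigma(L_k)^*$ meets each translate $\xi(\bB^2\times\{y\})$ in at least $C_0\omega^k$ points. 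A projection/coarea argument then gives $\cH^1(|\sigma^*|)\ge M\omega^k$, so $M^{-1}\omega^{-k}\chi_{\xi(\bB^2\times\bB^3(\delta))}$ is admissible and $\Mod_3(\Sigma(L_k)^*)\le C\omega^{-3k}$. Poletsky yields $m^{-2k}\le C'\omega^{-3k}$, contradicting $\omega>m^{2/3}$. The technical step you are missing entirely is the construction of this disk in $\R^3$: the target has no global spanning disk for a meridian, so one must use periodicity of lifts over the finite fiber $f^{-1}(\hat\alpha(0))\cap U$ to close up a lifted meridian and then cone it in the Euclidean source.
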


\begin{proof}[Proof of Theorem \ref{thm:QR non-ellip decomp} Assuming Theorem \ref{thm:omit}]
Let $f\colon \R^3\to (\bS^3/G_{\fI},d_\lambda)$ be a quasiregular map. We may assume that on $(\bS^3/G_{\fI})\setminus \pi(L_\infty)$ the metric $d_\lambda$ is given by a Riemannian metric $g_\lambda$. Then, by Theorem \ref{thm:omit}, $f(\R^3) \subset \bS^3/G_{\fI}\setminus \pi(L_\infty)$. Since $\pi(L_\infty)$ is a Cantor set by our assumption $m\geq 2$, $\bS^3/G_{\fI}\setminus \pi(L_\infty)$ is a Riemannian manifold with infinitely many ends. Thus, by the Holopainen--Rickman Picard theorem, $f$ is constant.
\end{proof}

\subsection{Proof of Theorem \ref{thm:omit}}
Suppose $f$ is a non-constant quasiregular map from $\R^3$ to $\bS^3/G$ having a value in $\pi(L_\infty)$, that is, there exists $q=f(p) \in \pi(L_{\infty})$.

We fix a normal neighborhood $U$ of $p$, i.e. a domain such that $U \cap f^{-1}(q)=\{p\}$ and $f(\partial U)=\partial f(U)$. Let $(L_k(q))$ be the branch of $q$. Since $L_k(q) \to \{q\}$ in the Hausdorff sense as $k\to \infty$, there exists $k_0\ge 1$ such that cubes-with-handles in the sub-branch $(L_k(q))_{k\geq k_0}$ are contained in the open set $f(U)$. By the self-similarity of $(L_k)_{k\ge 0}$, we may assume that $k_0=0$, and we have $H\subset f(U)$.

Given a longitude $\sigma \in \Sigma(L_k)$ for $k\ge 0$, we denote by $\sigma^*$ the union of subcurves of $f^{-1}(\sigma)$ which are contained in $U \cap f^{-1}(H)$. Moreover, we denote
\[
\Sigma(L_k)^* = \{\sigma^* \colon \sigma \in \Sigma(L_k)  \}.
\]
Then $f(\Sigma(L_k)^*)=\Sigma(L_k)$ and
\begin{equation}
\label{eq:poletsky}
\Mod_3 (\Sigma(L_k)) \le C \Mod_3 (\Sigma(L_k)^*),
\end{equation}
by Poletsky's inequality \eqref{eq:pole}, where $C$ depends on the distortion of $f$.

By \cite[Proposition 4.5]{HW} and \cite[Proposition 12.1]{PW}, we have the lower bound
\begin{equation}
\label{eq:down}
\Mod_3(\Sigma(L_k)) \geq C m^{-2k},
\end{equation}
where $C$ does not depend on $k$.

Thus it suffices to prove the upper estimate
\begin{equation}
\label{eq:up}
\Mod_3(\Sigma(L_k)^*) \leq C \omega^{-3k},
\end{equation}
where $C$ does not depend on $k$. Indeed, assuming \eqref{eq:up}, the theorem follows by combining \eqref{eq:poletsky} together with estimates \eqref{eq:down} and \eqref{eq:up}, and letting $k  \to \infty$.

We begin the proof of \eqref{eq:up} by fixing a meridian $\alpha$ of $H$ and a constant $C_0>0$ for which
\begin{equation}
\label{eq:kroh2}
\wind(L_k, \alpha, H) \geq C_0 \omega^k
\end{equation}
for all $k\ge 0$.

Let $W$ be a tubular neighborhood of $\partial H$ contained in $fU$ so that $W\cap L_1=\emptyset$; recall that $H=L_0$. Since $f|U$ is a proper map, $f(B_f\cap U)$ is a closed set in $fU$, where $B_f$ is the branch set of $f$. Furthermore, $f(B_f\cap U)$ has topological dimension at most $n-2$. Thus there exists a simple loop $\hat{\alpha} \colon [0,1] \to H\cap W\setminus f(B_f\cap U)$ which is homotopic to $\alpha$ in $W$.  Thus 
\[
\# (|\sigma|\cap \hat{\varphi}(\bB^2)) \ge C_0\omega^k
\]
for every longitude $\sigma \in \Sigma(L_k)$ and every $\hat{\varphi} \colon \bB^2\to H$ extending $\hat{\alpha}$.

Let $\{x_1,\ldots, x_j\}=f^{-1}(\hat{\alpha}(0))\cap U$. We denote by $\zeta \colon [0,\infty) \to H \cap W$ the infinite iterate to $\hat{\alpha}$, i.e. $\zeta(n+t)=\hat{\alpha}(t)$ for all $t\in [0,1)$ and $n\in \N$. Let $\tilde \zeta$ be a lift of $\zeta$ starting at $x_1$. Since $U$ is a normal neighborhood, we have $\tilde \zeta \colon [0,\infty) \to U$. In particular, $\tilde \zeta(n) \in \{x_1,\ldots, x_j\}$ for all $n\in \N$. By uniqueness of the lifts, there exists $1 < i \le j$ for which $\tilde \zeta(i)=x_1$ and $\tilde \zeta|[0,i]$ is a simple loop.

We may approximate $\tilde\zeta|[0,i]$ by a piece-wise affine Jordan curve $\gamma \colon \bS^1 \to \R^3$ for which $\beta= f \circ \gamma$ is contained in $H\cap W$ and homotopic to $\alpha$ in $W$. By coning $\gamma$ using a point in $\R^3$, we obtain a PL map $ \bB^2 \to \R^3$ which we denote by $\xi$. Let $T_1,\ldots, T_\ell$ be the $2$-simplices triangulating the image of $\xi$ obtained in the coning. Finally, we also choose and fix a regular neighborhood $A$ of $\beta(\bS^1)$ contained in $H\cap W$.

We extend $\xi$ to a map $\xi \colon \bB^2\times \bB^3(\delta) \to \mathbb{R}^3$ by $(x,y)\mapsto \xi(x)+y$. Then, for each $y \in \bB^{3}(\delta)$, the loop
\[
f \circ \xi|\partial \bB^2 \times \{y\}
\]
is homotopic to $\beta$ in $A$ and hence homotopic to $\alpha$ in $H\cap W$.

By \eqref{eq:kroh2},
\begin{equation}
\label{eq:lakko}
\#(|\sigma^*| \cap \xi(\bB^2\times\{y\})) \geq \# (|f\sigma^*| \cap f(\xi(\bB^2\times\{y\}))) \geq C_0 \omega^k
\end{equation}
for every $\sigma^* \in \Sigma(L_k)^*$ and $y\in \bB^3(\delta)$.

We use \eqref{eq:lakko} to show that there exists $M>0$ not depending on $k$ such that
\begin{equation}
\label{eq:lowhaus}
\haus^1(|\sigma^*|) \geq M \omega^k \quad \text{for every } \sigma^* \in \Sigma(L_k)^*.
\end{equation}

Assuming this, the theorem follows since
\[
M^{-1} \omega^{-k} \chi_{\xi(\bB^2 \times \bB^3(\delta))}
\]
is an admissible function for $\Sigma(L_k)^*$.

To prove \eqref{eq:lowhaus}, let $\sigma^* \in \Sigma(L_k)^*$, and analyse the map $\xi \colon \bB^2 \times \bB^3(\delta) \to  \R^3$. 

For every $1\le i \le \ell$, let $P_i = y_i + \operatorname{span}\{v_i^i, v_2^i\} \subset \R^3$ be the affine plane containing the triangle $T_i$. We fix $u=(u_1,u_2,u_3)\in \bS^2$ so that $v$ is not contained in any of the $2$-dimensional subspaces $\operatorname{span}\{v_i^i, v_2^i\}$ for all $1\le i\le \ell$.

Given $\sigma^* \in \Sigma(L_k)^*$, \eqref{eq:lakko} implies that for every $0\leq t \leq \delta$,
$$
\# (|\sigma^*| \cap (\cup_{i=1}^\ell(T_i + tu))) \geq C_0 \omega^k.
$$
In particular, there exists $1 \leq i(t)\leq \ell$ such that
$$
\# (|\sigma^*| \cap (T_{i(t)} + tu)) \geq C_0 \omega^k \ell ^{-1}.
$$
We conclude that there exists $i_0$ depending on $\sigma^*$, and a set $E \subset [0,\delta]$ with $\haus^1(E)\geq \delta \ell^{-1}$,
such that
\begin{equation}
\label{eq:tri}
\# (|\sigma^*| \cap (T_{i_0} + tu)) \geq C_0 \omega^k \ell ^{-1}
\end{equation}
for every $t \in E$. 

Next we note that $\sigma^*\in \Sigma(L_k)^*$ consists of finitely many loops $\sigma^*_j$. Indeed, this follows from path lifting for branched covers and the fact that $U$ is a normal domain of $f$. We may assume that each $\sigma^*_j$ is rectifiable, otherwise there is nothing to prove.

By applying an isometry, we may assume that 
\[
T_{i_0} \subset \R^2 \times\{0\}, \quad \text{and } u_3>0.
\]
Let $\Phi\colon \R^3 \to\R$ be the projection $(x_1,x_2,x_3)\mapsto x_3$ and set
\[
\tilde{\gamma}_j= \Phi \circ \sigma^*_j.
\]
Let $\gamma_j \colon [0,R_j] \to \R$ be the reparametrization of $\tilde{\gamma}_j$ by arclength. In particular,
\[
\sum_j R_j \leq \haus^1(|\sigma^*|)
\]
since $\Phi$ is a projection. Since $u_3>0$, we have by \eqref{eq:tri}, 
\[
\sum_j \#(\gamma_j^{-1}(t)) \geq C_0 \ell^{-1} \omega^k
\]
for all $t$ with $t u^{-1}_3\in E$.

Since the $\gamma_j$'s are parametrized by arc length, we get, by a change of variable,
\begin{eqnarray*}
u_3 C_0\ell^{-2} \delta \omega^k &\leq& u_3 C_0\ell^{-1} \haus^1(E) \omega^k \\
&\leq& \int_{\R} \sum_j \#(\gamma_j^{-1}(t)) \dt
=\sum_j\int_0^{R_j} |\gamma_j'|(s)\ds \\
&=& \sum_j \int_0^{R_j}1 \ds \leq \haus^1(|\sigma^*|),
\end{eqnarray*}
where $C_0$, $u_3$, $\ell$, and $\delta$ do not depend on $k$. Thus \eqref{eq:lowhaus} holds. The proof is complete.


\def\cprime{$'$}

\end{document}